\newtheorem{proposition}{Proposition}[section]
\newtheorem{theorem}[proposition]{Theorem}
\newtheorem{lemma}[proposition]{Lemma}
\newtheorem{definition}[proposition]{Definition}
\newtheorem{remark}[proposition]{Remark}
\newtheorem{example}[proposition]{Example}
\newenvironment{proof}{{\noindent \em Proof.}}{\hfill $\fbox{}$ \vspace*{5mm}}
\numberwithin{equation}{section}
\newcommand{\ba}{{\bf a}}
\newcommand{\bb}{{\bf b}}
\newcommand{\bg}{{\bf g}}
\newcommand{\bh}{{\bf h}}
\newcommand{\bp}{{\bf p}}
\newcommand{\bq}{{\bf q}}
\newcommand{\br}{{\bf r}}
\newcommand{\bx}{{\bf x}}
\newcommand{\by}{{\bf y}}
\newcommand{\bz}{{\bf z}}
\newcommand{\bu}{{\bf u}}
\newcommand{\bv}{{\bf v}}
\newcommand{\bw}{{\bf w}}
\newcommand{\cd}{{\cal D}}
\newcommand{\cg}{{\cal G}}
\newcommand{\ck}{{\cal K}}
\newcommand{\cl}{{\cal L}}
\newcommand{\cm}{{\cal M}}
\newcommand{\cp}{{\cal P}}
\newcommand{\cq}{{\cal Q}}
\newcommand{\cs}{{\cal S}}
\newcommand{\la}{\lambda}
\newcommand{\R}{{\mathbb{R}}}
\newcommand{\Rm}{{\mathbb{R}^m}}
\newcommand{\Rn}{{\mathbb{R}^n}}
\newcommand{\Rr}{{\mathbb{R}^r}}
\newcommand{\Rmm}{{\mathbb{R}^{m\times m}}}
\newcommand{\Rnn}{{\mathbb{R}^{n\times n}}}
\newcommand{\Rmn}{{\mathbb{R}^{m\times n}}}
\newcommand{\Rmr}{{\mathbb{R}^{m\times r}}}
\newcommand{\Rrn}{{\mathbb{R}^{r\times n}}}
\newcommand{\diag}{{\rm diag}}
\newcommand{\dist}{{\rm dist}}
\newcommand{\card}{{\rm card}}
\newcommand{\dom}{{\rm dom}}
\newcommand{\tr}{{\rm tr}}
\newcommand{\BE}{\begin{equation}}
\newcommand{\EE}{\end{equation}}
\DeclareMathOperator*{\argmin}{argmin}
\begin{document}

\title{\bf A Column-Wise Update Algorithm for Sparse Stochastic Matrix Factorization}
\author{Guiyun Xiao\thanks{School of Mathematical Sciences, Xiamen University, Xiamen 361005, People's Republic of China  (xiaogy999@163.com).}
\and Zheng-Jian Bai\thanks{Corresponding author. School of Mathematical Sciences and Fujian Provincial Key Laboratory on Mathematical Modeling \& High Performance Scientific Computing,  Xiamen University, Xiamen 361005, People's Republic of China (zjbai@xmu.edu.cn). The research of this author was partially supported by the National Natural Science Foundation of China (No. 11671337) and the Natural Science Foundation of Fujian Province of China (No. 2021J01033).}
\and Wai-Ki Ching\thanks{Advanced Modeling and Applied Computing Laboratory, Department of Mathematics, The University of Hong Kong, Pokfulam Road, Hong Kong (wching@hku.hk). Research supported in part by Hong Kong RGC GRF Grant no. 17301519, IMR and Seed Funding for Basic Research from Faculty of Science, The University of Hong Kong.}}
\date{}
\maketitle
%-----------------------------------------------------------------------
\begin{abstract}
Nonnegative matrix factorization arises widely in machine learning and data analysis.
In this paper, for a given factorization of rank $r$, we consider the sparse stochastic matrix factorization (SSMF) of decomposing a prescribed $m$-by-$n$ stochastic matrix $V$ into a product of an $m$-by-$r$ stochastic matrix $W$ and an $r$-by-$n$ stochastic matrix $H$, where both $W$ and $H$ are required to be sparse. With the prescribed sparsity level, we reformulate the SSMF as an unconstrained nonconvex-nonsmooth minimization problem and introduce a column-wise update algorithm for solving the minimization problem.
We show that our algorithm converges globally. The main advantage of our  algorithm is that the generated sequence converges to a special critical point of the cost function, which is nearly a global minimizer over  each column vector of the $W$-factor and is a global minimizer over the $H$-factor as a whole if there is no sparsity requirement on $H$.
Numerical experiments on both synthetic and real data sets are given to demonstrate the effectiveness of our proposed algorithm.
\end{abstract}

\vspace{3mm}
{\bf Keywords.}
Nonnegative matrix factorization, stochastic matrix factorization, sparsity,  alternating minimization, proximal gradient method

\vspace{3mm}
{\bf AMS subject classifications.} 65K05, 90C06, 90C26

%--------------------------------
\section{Introduction}\label{sec1}
Since the introduction of simple and efficient algorithms in the seminal work of Lee and Seung \cite{LS99},  nonnegative matrix factorization (NMF) has been widely adopted in various fields such as document clustering \cite{BG09,LC17}, computer vision \cite{BG07},
recommendation systems \cite{ZW06}, bioinformatics \cite{TB10}, face recognition \cite{KZ07,ZT06,ZF11}, acoustic signal processing \cite{YZ15}, source separation \cite{CZ09,GW14}, and
modeling default data via interactive hidden Markov model (IHMM) \cite{CF07,CS09}.

In this paper, we consider the following sparse stochastic matrix factorization (SSMF) problem.
Let  $V=(v_{ij}) \in \Rmn$ be a (column) stochastic data matrix, i.e., all its entries are nonnegative with each column summing to $1$ ($\sum_{i=1}^{m}v_{ij}=1$ for $j=1,\ldots,n$).
For a predetermined factor rank $r<\min\{m,n\}$, the SSMF attempts to find two  stochastic matrices $W\in\Rmr$ and $H\in\Rrn$ such that $V\approx WH$, where the sparseness constraints are imposed on both $W$ and $H$ (on columns).

In fact, the SSMF is  a sparse {\em probabilistic latent semantic analysis} (PLSA). PLSA  is a statistical technique for factor analysis of two-mode and co-occurrence data \cite{H99}. Compared to {\em latent semantic analysis}, PLSA possesses a solid
statistical foundation in model fitting, model selection and complexity control \cite{H01}.
Like NMF \cite{DL08}, PLSA has been used in many applications such as information retrieval and filtering, natural language processing, machine learning from text \cite{H99,H01}. In \cite{A16,AG12}, some conditions were presented for the existence of a unique solution of the stochastic matrix factorization. However, as noted in \cite{NL18}, these conditions may not be true  in the context of choice modeling.

There exist different methods for the sparse NMF since sparseness can achieve a sparse representation or data clustering.
In particular,  in \cite{H02}, Hoyer presented the projected gradient based multiplicative algorithm for solving the following minimization problem:
\BE\label{pro:smf-l1}
\min\limits_{W,H\ge 0}  C(W,H)=\frac{1}{2}\|V-WH\|_F^2+\la\|H\|_1,
\EE
where $V\ge 0$ is the input data matrix, $\la>0$ is a regularization parameter, and $\|H\|_1:=$$\sum_{i,j}|h_{ij}|$.
However, for the SSMF, the $\ell_1$-norm of $H$ is constant as $H$ is a stochastic matrix. There exist other cost functions for measuring the factorization residual such as the generalized {\em Kullback–Leibler} divergence \cite{DL08,LS01} and Minkowski family of metrics, see for instance \cite{WZ13} and the references therein. For additive regularization for stochastic matrix factorization, one may refer to \cite{VP14}. For other numerical algorithms for sparse topic modeling under the PLSA model, one may refer to \cite{WZ22} and the references therein. Other regularizers include the entropic prior \cite{SR07},  the pseudo-Dirichlet prior \cite{LU11}, and the $\ell_{1/2}$-regularization \cite{QJ11}.

To our best knowledge, there exist only a few works on the sparse NMF with $\ell_0$ constraints.
In particular, in \cite{XF21}, Xiu et al. gave a structured joint sparse NMF model:
\BE\label{pro:smf-sjs}
\begin{array}{ll}
\min\limits_{W,H} & \displaystyle  C(W,H)=\frac{1}{2}\|V-WH\|_F^2+\la\tr(H^TLH)\\[2mm]
\mbox{subject to (s.t.)} & W\ge 0,\quad  H\ge 0,\quad \|H\|_{2,0}\le s,
\end{array}
\EE
where $L$ is the graph Laplacian matrix learned from the input data matrix $V\ge 0$, $s$ is the prescribed  sparsity level, and $\|H\|_{2,0}$ denotes the $\ell_{2,0}$ norm of $H$, i.e., $\|H\|_{2,0}:=\card(\{t : \|H(t,:)\|\neq 0\})$. Also, an optimization algorithm based on the alternating direction method of multipliers was presented for solving the above model. However, it is not easy to choose an appropriate regularization parameter such that  a good  tradeoff between the residual term and the regularization term can be obtained \cite{LY14,WB04}.

Recently, there have been some development in cardinality/$\ell_0$-constrained optimization problems. In particular,
several optimization methods were introduced for cardinality-constrained problems appeared in portfolio optimization and statistical learning \cite{BS09,BB18,DA19}.
In \cite{XD19,XL16}, Xu et al. presented some projected gradient methods for cardinality constrained optimization appeared in compressed sensing, financial optimization and image processing. In \cite{KR21}, Kanzow et al. gave an augmented Lagrangian method for cardinality-constrained optimization problems.

In this paper, we directly apply $\ell_0$ constraints to measure the sparseness of the $W$-factor and the $H$-factor in the SSMF. %It is natural to add the sparsity to each column of the $W$-factor and the $H$-factor (i.e., $\|H(t,:)\|_0\le s$ for all $t=1,\ldots, r$) since the $H$-factor is stochastic.
We find a solution to the SSMF by minimizing the distance between the input stochastic data matrix $V$ and the product $WH$ in Frobineus norm, where each column of the stochastic matrices  $W$ and $H$ has the prescribed sparsity level.
To our best knowledge, most optimization algorithms for solving the sparse NMF treat each of the two factors as a whole, which may lead to slow convergence rate.
To develop an effective algorithm with simple update and global convergence,
we propose a column-wise update algorithm for solving the SSMF.
This is motivated by the alternating minimization (AM) method, see for instance the book
\cite{B17},  and the proximal alternating linearized minimization (PALM) \cite{BS14} and the block coordinate update \cite{XY13,XY17} for nonconvex and nonsmooth optimization.
We update the $W$-factor column by column via the AM method or the cyclic projected gradient method  and update the $H$-factor column by column by using the projected gradient method, where the involved subproblems can be solved efficiently.
In \cite[Theorem 14.3]{B17}, it has been shown that every limit point of the sequence generated by the AM method (i.e., the block coordinate descent method as in \cite[Section 2.7]{B99}) is a stationary point, which is a coordinate-wise minimum.
A PALM algorithm in \cite{BS14} and the block coordinate update algorithms in \cite{XY13,XY17} were proposed for  solving nonconvex-nonsmooth optimization.
In particular, based on the Kurdyka-{\L}ojasiewicz (KL) property, the proposed PALM algorithm and the block coordinate update algorithms were shown to converge globally to a critical point.
For the sparse NMF,  the $\ell_0$-norm $\|\cdot\|_0$ is a semi-algebraic function, which satisfies the KL property \cite[Theorem 3 and Examples 2--3]{BS14}.
Then we reformulate the SSMF as an unconstrained nonconvex-nonsmooth minimization problem, where the objective function measures the difference between $V$ and $WH$ and the entry-wise nonnegativity and  sparseness of $W$ and $H$. The global convergence of the proposed method is established.  The main advantage of our method lies  in the fact that the sequence generated by our method converges globally to a special critical point, at which the objective function is nearly globally minimized  over each column vector of the $W$-factor and  is globally minimized over the $H$-factor as a whole if the sparseness constraint of $H$ is removed.
Numerical experiments on both synthetic and real data sets show that the proposed algorithm is more effective than the PALM method for solving the SSMF in terms of the reconstruction error.

Throughout this paper, we use the following notations.
Let $\Rmn$ be the set of all $m\times n$ real matrices and $\Rn=\R^{n\times 1}$. Let $\Rn$ be equipped with the Euclidean inner product $\langle \cdot,\cdot\rangle$ and its induced norm $\|\cdot\|$. Let $\Rmn$ be equipped with the Frobenius inner product $\langle \cdot,\cdot\rangle_F$ and its induced Frobenius norm $\|\cdot\|_F$.
The superscript``$\cdot^T$" stands for the  transpose of a matrix or vector. $I_n$ denotes the identity matrix of order $n$.
Let $|\cdot|$ be the absolute value of a real number or the components of a real vector and
$\|\cdot\|_p$ be the matrix $p$-norm (especially $p=1,2,\infty$)  and $\|\cdot \|_0$ denotes the number of nonzero entries of a vector or a matrix. We denote by $\lambda_{\max}(\cdot)$ the largest eigenvalue of a symmetric matrix.
Let $\Pi_\cd(\cdot)$ denote the metric projection onto a subset $\cd$ in $\Rn$ or $\Rmn$. For any matrix $A$, $A\ge 0$ means that $A$ is entrywise nonnegative and $A(i,:)$ and $A(:,j)$ denote respectively the $i$-th row and the $j$-th column of $A$.
For any given point $(X_1,X_2)\in\R^{m_1\times n_1}\times \subset\R^{m_2\times n_2}$ and any subset $\cd_1\times \cd_2\subset\R^{m_1\times n_1}\times \R^{m_2\times n_2}$, the distance from $(X_1,X_2)$ to $\cd_1\times \cd_2$ is determined by
\[
\dist((X_1,X_2),\cd_1\times \cd_2))=\inf\{\|(X_1,X_2)-(Y_1,Y_2)\|_F\;|\; (Y_1,Y_2)\in\cd_1\times \cd_2\}.
\]
Let $[n]=\{1,2,\ldots,n\}$ and for any nonempty set $\cs\subset[n]$, let $\card(\cs)$ and $\overline{\cs}$ be the cardinality of $S$ and the complement of $\cs$ in $[n]$, respectively.
For any set $\cs\subset[n]$, $\bx_\cs$ is the subvector of a vector $\bx$ with components indexed by $\cs$.

The rest of this paper is organized as follows. In Section \ref{sec2}, we reformulate the  SSMF as a nonconvex-nonsmooth minimization problem and then propose  a new column-wise update algorithm for solving it. In Section \ref{sec3}, we establish the global convergence of the proposed algorithm. In Section \ref{sec4}, we report some numerical experiments to illustrate the effectiveness of our method.
Finally, concluding remarks are given in Section \ref{sec5}.

\section{A column-wise update algorithm}\label{sec2}

In this section, we reformulate the SSMF as an unconstrained nonconvex-nonsmooth minimization problem. Then we  propose a column-wise update algorithm for solving the minimization problem.

\subsection{Preliminaries}
%We present some preliminary results on subgradients of nonsmooth functions.% and Kurdyka-{\L}ojasiewicz (KL) property.
We recall the definition of subdifferential (subgradient) for a nonsmooth function in \cite{M06,RW98}.
\begin{definition}\label{app:sg}
Let $h:\Rn\to (-\infty, +\infty]$ be a proper lower semicontinuous (lsc) function. Then, the set
\[
\widehat{\partial} h(\bar{\ba}):=\big\{\br\in\Rn \; |\; \liminf_{\ba\to\bar{\ba}}\frac{h(\ba)-h(\bar{\ba})-\langle \br,\ba-\bar{\ba}\rangle }{\|\ba-\bar{\ba}\|}\geq 0\big\}.
\]
is the presubdifferential or Fr\'{e}chet subdifferential of $h$ at $\bar{\ba}\in \dom~h$ and we set $\widehat{\partial} h(\bar{\ba}):=\emptyset$ if $\bar{\ba}\notin \dom~h$. Moreover, the set
\BE\label{def:lsub}
\partial h(\bar{\ba}):=\big\{\br\in\Rn~|\mbox{~$\exists \ba^k\to\bar{\ba}$, $h(\ba^k)\to h(\bar{\ba})$ and $\br^k\in \widehat{\partial} h(\ba^k)\to\br$ as $k\to \infty$}\big\}
\EE
is the limiting subdifferential of $h$ at $\bar{\ba}\in\Rn$.
\end{definition}

From \cite[Theorem 8.6]{RW98}, it follows that, for any  $\bar{\ba}\in \dom~g$, $\widehat{\partial} h(\bar{\ba})\subset\partial h(\bar{\ba})$ and $\widehat{\partial} h(\bar{\ba})$ is convex and closed while  $\partial h(\bar{\ba})$ is closed.
A point $\bar{\ba}\in \Rn$ is called a {\em critical point} of $h$ if ${\bf 0}\in\partial h(\bar{\ba})$.

\subsection{Reformulation}
As in \cite{LS99}, for the SSMF,  we measure the approximation  quantity by the residual $\|V-WH\|_F$ in the Frobenius norm. Then we can  rewrite the SSMF as the following minimization problem:
%\BE\label{pro:smf}
%\begin{array}{ll}
%\min\limits_{W\in\Rmr,H\in\Rrn} & \displaystyle f(W,H)=\frac{1}{2}\|V-WH\|_F^2\\[2mm]
%\mbox{s.t.} & W\in \cg_1,\quad H \in \cg_2\cap\cg_3,
%\end{array}
%\EE
%where $\cg_1:=\{W=(w_{ij})\in\Rmr\;|\;  W\ge 0,\; \sum\limits_{t=1}^{r}w_{it}=1,\; i=1, \ldots, m \}$, $\cg_2:=\{H=(h_{ij})\in\Rrn\;|\;  H\ge 0,\; \sum\limits_{j=1}^{n}h_{tj}=1,\; t=1,\ldots,r\}$, and $\cg_3:=\{H\in\Rrn\;|\; \|H(t,:)\|_0\le s,\; t=1,\ldots,r\}$.
\BE\label{pro:smf}
\begin{array}{ll}
\min\limits_{W\in\Rmr,H\in\Rrn} & \displaystyle f(W,H)=\frac{1}{2}\|V-WH\|_F^2\\[2mm]
\mbox{s.t.} & W\in \cg_1\cap \cg_2,\quad H \in \cg_3\cap\cg_4,
\end{array}
\EE
where $\cg_1:=\{W=(w_{ij})\in\Rmr\;|\;  W\ge 0,\; \sum_{i=1}^{m}w_{it}=1,\; t=1, \ldots, r \}$,
$\cg_2:=\{W\in\Rmr\;|\; \|W(:,t)\|_0\le s_1,\; t=1,\ldots,r\}$, $\cg_3:=\{H=(h_{ij})\in\Rrn\;|\;  H\ge 0,\; \sum_{t=1}^{r}h_{tj}=1,\; j=1,\ldots,n\}$, and $\cg_4:=\{H\in\Rrn\;|\; \|H(:,j)\|_0\le s_2,\; j=1,\ldots,n\}$. Here, $s_1,s_2>0$ are prescribed sparsity level.

%We  point out that, if $s=n$,  then the SSMF \eqref{pro:smf} is reduced to the stochastic matrix factorization (SMF).

It is easy to see that the SSMF \eqref{pro:smf} can be reduced to the following unconstrained minimization problem:
\BE\label{ssmf:wh}
\min\limits_{W\in\Rmr,H\in\Rrn} F(W,H):=f(W,H)+\delta_{\cg_1\cap \cg_2}(W)+\delta_{\cg_3\cap\cg_4}(H),
\EE
where $\delta_\ck$ is the indicator function of a set $\ck$, i.e., $\delta_\ck(\bx)=0$ if $\bx\in\ck$ and $\delta_\ck(\bx)=+\infty$ if  $\bx\not\in\ck$.

In Problem \eqref{ssmf:wh}, $f$ is nonconvex and smooth, and both $\delta_{\cg_1\cap\cg_2}$ and $\delta_{\cg_3\cap\cg_4}$ are nonsmooth.
One may solve Problem \eqref{ssmf:wh} by the PALM method in \cite{BS14}, which is described in Algorithm \ref{PALM} below.
%-----------------------
\begin{algorithm} 
\caption{A PALM method for Problem \eqref{ssmf:wh}} \label{PALM}
\begin{description}
\item [{Step 0}.] Choose $W^0\in\cg_1\cap\cg_2$, $H^0\in\cg_3\cap\cg_4$, $\delta_1>0$, $\delta_2>0$, and let $k:=0$.
\item[{Step 1}.] Take $\mu_{k}=1/(\|H^{k}\|_F^2+\delta_1)$ and compute
\[
W^{k+1}=\Pi_{\cg_1\cap\cg_2}\big(W^{k}-\mu_{k}\nabla_Wf(W^k, H^k)\big).
\]
\item [{Step 2.}] Take $\nu_{k}=1/(\|W^{k+1}\|_F^2+\delta_2)$ and compute
\[
H^{k+1} = \Pi_{\cg_3\cap\cg_4} \big(H^{k}-\nu_{k} \nabla_H f (W^{k+1}, H^k)\big).
\]
\item [Step 3.] Replace $k$ by $k+1$ and go to Step 1.
\end{description}
\end{algorithm}

In Algorithm \ref{PALM}, both the $W$-factor  and  the $H$-factor are updated as a whole, which may cause slow convergence. To overcome this drawback, we develop an efficient algorithm with simpler update for solving the SSMF \eqref{pro:smf}.

In the following, we present a column-wise update algorithm for solving the SSMF \eqref{pro:smf}, where both the $W$-factor  and  the $H$-factor are updated column-by-column.
To begin, we let
\[
\cp_n:=\{\bx\in \Rn \; |\; \bx\ge 0,\;\sum_{j=1}^{n} x_j=1\},
\quad \cq_n^s:=\{\bx\in \Rn \; |\; \|\bx\|_0\leq s\}.
\]
For convenience, we let
\BE\label{def:f}
f(W,H)=\frac{1}{2}\big\|V-
[\bw_1,\ldots,\bw_r]
[\bh_1,\ldots,\bh_n]
\big\|_F^2\equiv f(\bw_1,\ldots,\bw_r, \bh_1,\ldots,\bh_n),
\EE
for all $W:=[\bw_1,\ldots,\bw_r]\in\Rmr$ and $H:=[\bh_1,\ldots,\bh_n]\in\Rrn$. Also, we define $F$ by
\begin{eqnarray}\label{def:Ffd}
F(W,H) &=&f(\bw_1,\ldots,\bw_r, \bh_1,\ldots,\bh_n) +\sum_{i=1}^r\delta_{\cp_m}(\bw_i)+\sum\limits_{i=1}^r\delta_{\cq_m^{s_1}}(\bw_i)\nonumber\\
&&+\sum\limits_{t=1}^n\delta_{\cp_r}(\bh_t)+\sum\limits_{t=1}^n\delta_{\cq_r^{s_2}}(\bh_t)
\equiv F(\bw_1,\ldots,\bw_r, \bh_1,\ldots,\bh_n),
\end{eqnarray}
for all $W:=[\bw_1,\ldots,\bw_r]\in\Rmr$ and $H:=[\bh_1,\ldots,\bh_n]\in\Rrn$.
Then the SSMF \eqref{pro:smf} can be rewritten  equivalently as the following unconstrained minimization problem:
\BE\label{pro:smf-eq}
\begin{array}{cc}
\min\limits_{\bw_1,\ldots,\bw_r\in\R^{m}, \bh_1,\ldots,\bh_n\in\R^{r}} & \displaystyle F(\bw_1,\ldots,\bw_r, \bh_1,\ldots,\bh_n).
\end{array}
\EE
To avoid confusion, we refer to Problem \eqref{pro:smf-eq} as the SSMF.

Let $V:=[\bv_1,\ldots,\bv_n]$.
Then the partial gradient $\nabla_{\bw_i} f$ of $f$ defined in
\eqref{def:f} at $(\bw_1,\ldots,\bw_r$, $\bh_1,\ldots,\bh_n)\in\underbrace{\Rm\times \cdots\times\Rm}_{r}\times \underbrace{\Rr\times \cdots\times\Rr}_{n}$ is given by
\BE\label{f:grad}
\nabla_{\bw_i} f(\bw_1,\ldots,\bw_r,\bh_1,\ldots,\bh_n)=-(U_i-\bw_iH(i,:))(H(i,:))^T
\EE
with the Lipschitz constant
$\|H(i,:)\|^2$ for $i=1,\ldots,r$,
where
$U_i:=V-\sum_{j=1}^{i-1}\bw_jH(j,:)-\sum_{j=i+1}^{r}\bw_jH(j,:)$ for $1\le i \le r$.
Moreover,  the partial gradient $\nabla_{\bh_t} f$ of $f$ at $(\bw_1,\ldots,\bw_r,\bh_1,\ldots,$ $\bh_n)\in\underbrace{\Rm\times \cdots\times\Rm}_{r}\times \underbrace{\Rr\times \cdots\times\Rr}_{n}$ is given by
\BE\label{f:gradh}
\nabla_{\bh_t} f(\bw_1,\ldots,\bw_r,\bh_1,\ldots,\bh_n)=W^T(W\bh_t-\bv_t),
\EE
with  the Lipschitz constant $\|W^TW\|_2$  for $t=1,\ldots,n$.
%with the Lipschitz constant
%$\|W(:,t)\|^2$ for $t=1,\ldots,r$, where
%$U_t:=V-\sum_{j=1}^{t-1}W(:,j)\bh_j^T-\sum_{j=t+1}^{r}W(:,j)\bh_j^T,\quad 1\le t\le r.$

We must point out  that the function $F$ defined in  \eqref{def:Ffd} is nonconvex-nonsmooth as $f$ is nonconvex and smooth while $\delta_{\cp_m}$, $\delta_{\cp_r}$, $\delta_{\cq_m^{s_1}}$ and $\delta_{\cq_r^{s_2}}$ are all nonsmooth. For the partial subdifferential of $F$ defined by \eqref{def:Ffd},
we have the following result from \cite{AB10,BS14,RW98}.
\begin{lemma}\label{app:ps}
Let $F$ be defined in  \eqref{def:Ffd}. Then for any
\[
(W,H):=(\bw_1,\ldots,\bw_r,\bh_1,\ldots,\bh_n) \in\underbrace{\Rm\times \cdots\times\Rm}_{r}\times \underbrace{\Rr\times \cdots\times\Rr}_{n},
\]
we have $\partial F(W,H)=(\partial_{\bw_1}F(W,H),\ldots,\partial_{\bw_r}F(W,H),
\partial_{\bh_1}F(W,H),\ldots,\partial_{\bh_n}F(W,H))$, where
$\partial_{\bw_i}F$ $(\bw_1,\ldots,\bw_r,\bh_1,\ldots,\bh_n)=\{\nabla_{\bw_i} f(\bw_1,\ldots,\bw_r,\bh_1,\ldots,\bh_n)+\partial\delta_{\cp_m}(\bw_i)+\partial\delta_{\cq_m^{s_1}}(\bw_i)\}
$
for $i=1,\ldots,r$ and
$
\partial_{\bh_t} F(\bw_1,\ldots,\bw_r,\bh_1,\ldots,\bh_n)=\{\nabla_{\bh_t} f(\bw_1,\ldots,\bw_r,$ $\bh_1,\ldots,\bh_n) +\partial\delta_{\cp_r}(\bh_t) +\partial\delta_{\cq_r^{s_2}}(\bh_t)\}
$
for $t=1,\ldots,n$.
\end{lemma}

We note that  $\delta_{\cp_m}$, $\delta_{\cp_r}$, $\delta_{\cq_m^{s_1}}$ and $\delta_{\cq_r^{s_2}}$ are all  lower semicontinuous (lsc).
As noted in  \cite{BS14},  the $\ell_0$-norm $\|\cdot\|_0$ is
semi-algebraic and thus satisfies the KL property (see for instance \cite{AB10,BS14,XY13,XY17}). Hence, $F$ is semi-algebraic  and thus is a KL function.
It is natural to solve the SSMF \eqref{pro:smf-eq} by the PALM method in \cite{BS14} and the generated sequence converges to a critical point of $F$.  However,  for any fixed $W=[\bw_1,\ldots,\bw_r]\in\Rmr$,  $F$ is separable with respect to the variables $\bh_t$'s, i.e.,
      \begin{eqnarray*}
      F(W,H)&=&\sum_{t=1}^n\frac{1}{2}\|\bv_t-W\bh_t\|^2
      +\sum_{i=1}^r\delta_{\cp_m}(\bw_i)+\sum\limits_{i=1}^r\delta_{\cq_m^{s_1}}(\bw_i) \\
     &&  +\sum\limits_{t=1}^n\delta_{\cp_r}(\bh_t) +\sum\limits_{t=1}^n\delta_{\cq_r^{s_2}}(\bh_t).
      \end{eqnarray*}
Then it is desired to n update $\bh_t$'s separately via the proximal gradient method. On the other hand, for any fixed $H=[\bh_1,\ldots,\bh_n]\in\Rrn$, $F$ is non-separable with respect to the variables $\bw_i$'s.
Thus, it is preferred to apply the AM method to updating $\bw_i$'s. However, one can see from \cite[Chapter 14]{B17} and \cite[Section 2.7]{B99}) that, when we directly apply the AM method to the SSMF \eqref{pro:smf-eq}, it is not guaranteed that every limit point of the sequence generated by the AM method is a column-wise minimum point, which is  not necessarily  a stationary point since  $F$ is nonconvex-nonsmooth. In this paper, to take full advantage of both the AM method and the PALM method, we propose a column-wise update algorithm for solving the SSMF \eqref{pro:smf-eq}, which is stated in Algorithm \ref{alg1} below.

%------------------------------------
\begin{algorithm}
\caption{A column-wise update algorithm for solving  the SSMF \eqref{pro:smf-eq}} \label{alg1}
\begin{description}
\item [{Step 0}.] Choose $W^0:=[\bw_1^0,\ldots,\bw_r^0]\in\cg_1\cap\cg_2$, $H^0:=[\bh_1^0,\ldots,\bh_n^0] \in\cg_3\cap\cg_4$, $\delta_1>0$, $\delta_2>0$, $c\ge 1/\delta_2$, and let $k:=0$.
\item [{Step 1.}] For $i=1,\ldots,r$, if $\|H^{k}(i,:)\|=0$, then set $\bw_i^{k+1}=\bw_i^k$; else compute
\BE\label{eqw1}
\bar{\bw}_i^{k}=\argmin_{\bw_i\in\cp_m\cap\cq_m^{s_1}} \phi_{k}(\bw_i)
\EE
such that if $\phi_{k}(\bw_i^k) -\phi_{k}(\bar{\bw}_i^k)\geq \frac{\delta_1}{2}\|\bw_i^k-\bar{\bw}_i^{k}\|^2$, then set $\bw_i^{k+1}=\bar{\bw}_i^{k}$; else set
\BE\label{eqw2}
\bw_i^{k+1} = \Pi_{\cp_m\cap\cq_m^{s_1}} \big(\bw_i^{k}-\mu_{ik} \nabla\phi_{k}(\bw_i^k)\big),
\EE
where $\phi_{k}(\bw_i):=f(\bw_1^{k+1},\ldots,\bw_{i-1}^{k+1},\bw_i,\bw_{i+1}^k,\ldots,\bw_r^k, H^k)$,
$\Phi_{k}(\bw_i):=\phi_{k}(\bw_i) +\delta_{\cp_m}(\bw_i)+\delta_{\cq_m^{s_1}}(\bw_i)$,
and
$\mu_{ik}=1/(\|H^{k}(i,:)\|^2+\delta_1).$
Set
\[
W^{k+1}:=[\bw_1^{k+1},\ldots,\bw_r^{k+1}].
\]
\item[{Step 2}.] For $t=1,\ldots,n$, take $\nu_{tk}=\min\big\{c,\frac{\|\nabla\psi_k(\bh_t^k)\|^2} {\|W^{k+1}\nabla\psi_k(\bh_t^k)\|^2}\big\}$ and compute
\BE\label{eqh1}
\bar{\bh}_t^{k}=\Pi_{\cp_r\cap\cq_r^{s_2}}\big(\bh_t^{k}-\nu_{tk}\nabla\psi_k(\bh_t^k)\big)
\EE
such that if $\psi_{k}(\bh_t^{k})-\psi_{k}(\bar{\bh}_t^{k})\ge \frac{\delta_2}{2}\|\bh_t^{k}-\bar{\bh}_t^{k}\|^2$, then set $\bh_t^{k+1}=\bar{\bh}_t^{k}$; else set
\BE\label{eqh2}
\bh_t^{k+1}=\Pi_{\cp_r\cap\cq_r^{s_2}}\big(\bh_t^{k}-\nu_{tk}\nabla\psi_k(\bh_t^k)\big),
\EE
where $\psi_{k}(\bh_t):=f(W^{k+1}, \bh_1^k,\ldots,\bh_{t-1}^k,\bh_t,\bh_{t+1}^k,\ldots,\bh_n^k,),$
$\Psi_{k}(\bh_t):=\psi_{k}(\bh_t)+\delta_{\cp_r}(\bh_t)+\delta_{\cq_r^{s_2}}(\bh_t)$, and $\nu_{tk}=1/(\|(W^{k+1})^TW^{k+1}\|_2+\delta_2).$
Set
\BE\label{H}
H^{k+1}:=[\bh_1^{k+1},\ldots,\bh_n^{k+1}].
\EE

%\item[{Step 1}.] For $i=1,\ldots,m$, take $\mu_{ik}=\min\big\{c,\frac{\|\nabla\phi_k(\bw_i^k)\|^2} {\|(H^k)^T\nabla\phi_k(\bw_i^k)\|^2}\big\}$ and compute
%\BE\label{eq:wk}
%\bar{\bw}_i^{k}=\Pi_{\cp_r}\big(\bw_i^{k}-\mu_{ik}\nabla\phi_k(\bw_i^k)\big)
%\EE
%such that if $\Phi_{k}(\bw_i^{k})-\Phi_{k}(\bar{\bw}_i^{k})\ge \frac{\delta_1}{2}\|\bw_i^{k}-\bar{\bw}_i^{k}\|^2$, then set $\bw_i^{k+1}=\bar{\bw}_i^{k}$; else set
%\BE\label{eq:wik}
%\bw_i^{k+1}=\Pi_{\cp_r}\big(\bw_i^{k}-\mu_{ik}\nabla\phi_k(\bw_i^k)\big),
%\EE
%where $\phi_{k}(\bw_i):=f(\bw_1^k,\ldots,\bw_{i-1}^k,\bw_i,\bw_{i+1}^k,\ldots,\bw_m^k, H^k),$
%$\Phi_{k}(\bw_i):=\phi_{k}(\bw_i)+\delta_{\cp_r}(\bw_i)$, and $\mu_{ik}=1/(\|H^k(H^k)^T\|_2+\delta_1).$
%Set
%\[
%W^{k+1}:=[\bw_1^{k+1},\ldots,\bw_m^{k+1}]^T.
%\]
%\item [{Step 2.}] For $t=1,\ldots,r$, if $\|W^{k+1}(:,t)\|=0$, then set $\bh_t^{k+1}=\bh_t^k$; else compute
%\BE\label{eq:hlk-mp}
%\bar{\bh}_t^{k}=\argmin_{\bh_t\in\cp_n\cap\cq_n^s} \psi_{k}(\bh_t)
%\EE
%such that if $\Psi_{k}(\bh_t^k) -\Psi_{k}(\bar{\bh}_t^k)\geq \frac{\delta_2}{2}\|\bh_t^k-\bar{\bh}_t^{k}\|^2$, then set $\bh_t^{k+1}=\bar{\bh}_t^{k}$; else set
%\BE\label{eq:hlk}
%\bh_t^{k+1} = \Pi_{\cp_n\cap\cq_n^s} \big(\bh_t^{k}-\nu_{tk} \nabla\psi_{k}(\bh_t^k)\big),
%\EE
%where $\psi_{k}(\bh_t):=f(W^{k+1}, \bh_1^{k+1}, \ldots, \bh_{t-1}^{k+1}, \bh_t, \bh_{t+1}^{k}, \ldots, \bh_r^{k}),$
%$\Psi_{k}(\bh_t):=\psi_{k}(\bh_t) +\delta_{\cp_n}(\bh_t)+\delta_{\cq_n^s}(\bh_t)$, and
%$\nu_{tk}=1/(\|W^{k+1}(:,t)\|^2+\delta_2).$
%Set
%\[
%H^{k+1}:=[\bh_1^{k+1},\ldots,\bh_r^{k+1}]^T.
%\]
\item [Step 3.] Replace $k$ by $k+1$ and go to Step 1.
\end{description}
\end{algorithm}

Regarding Algorithm \ref{alg1}, we have the following remarks.
\begin{itemize}
  \item In Step 2 of  Algorithm \ref{alg1}, we have by the definition of $\psi_k$, for any $1\le t\le n$,
      \[
      \psi_{k}(\bh_t^{k})-\psi_{k}(\bar{\bh}_t^{k})=\frac{1}{2}\big(\|\bv_t-W^{k+1}\bh_t^k\|^2
      -\|\bv_t-W^{k+1}\bar{\bh}_t^k\|^2\big),
      \]
      which is easy to estimate.

\item In Step 1 of  Algorithm \ref{alg1}, for each $1\le i\le r$, we update $\bw_i^k$ by \eqref{eqw1}  if the minimum in \eqref{eqw1} is uniquely attained, which satisfies the sufficient decrease condition
$\phi_{k}(\bw_i^k) -\phi_{k}(\bar{\bw}_i^k)\geq \frac{\delta_1}{2}\|\bw_i^k-\bar{\bw}_i^{k}\|^2;$
Otherwise, we update $\bw_i^k$ via the proximal gradient scheme \eqref{eqw2}.

  \item For the scalar $\nu_{tk}$ defined in Step 2 of  Algorithm \ref{alg1}, we have the following bounds.
Since $W^{k+1}=[\bw_1^{k+1},\ldots,\bw_r^{k+1}]$ with $\bw_i^{k+1}\in\cp_m\cap\cq_m^{s_1}$ for $i=1,\ldots,r$, we have $\|W^{k+1}\|_1=1$ and $\|W^{k+1}\|_\infty=\max_{1\le i\le m}\sum_{j=1}^rw_{ij}\le r$. Thus,
\begin{small}
      \BE\label{h:2n}
      \|(W^{k+1})^TW^{k+1}\|_2\le \|W^{k+1}\|_2^2\le \|W^{k+1}\|_1\|W^{k+1}\|_\infty=\|W^{k+1}\|_\infty\le r,
      \EE
\end{small}
      If $\nu_{tk}$ is determined by \eqref{eqh1}, then from  \eqref{h:2n} we have
      \BE\label{mu:bd1}
     \nu_{tk}= c\ge 1/\delta_2\quad\mbox{or}\quad 1/r\le 1/\|(W^{k+1})^TW^{k+1}\|_2\le \nu_{tk}\le c.
       \EE
       If $\nu_{tk}$ is determined by \eqref{eqh2}, then from  \eqref{h:2n} we have
       \BE\label{mu:bd2}
       1/(r+\delta_2)\le 1/( \|(W^{k+1})^TW^{k+1}\|_2+\delta_2)= \nu_{tk}\le 1/\delta_2.
      \EE
  \item All subproblems in  Algorithm \ref{alg1} can be solved efficiently (see Section \ref{sec31}).
%The projection onto $\cp_n\cap\cq_n^s$ is described as  Algorithm \ref{alg:proj} and the projection onto $\cp_r$ can be seen as the projection onto $\cp_r\cap\cq_r^r$.
%We also point out that the minimization in  \eqref{eq:hlk-mp} is in fact a projection onto $\cp_n\cap\cq_n^s$ (see Lemma \ref{lem:hkbar}).
\end{itemize}

\section{Convergence analysis}\label{sec3}
In this section, we  first derive the projection onto $\cp_n\cap\cq_n^s$. Then we establish  the global convergence of Algorithm \ref{alg1}  by combining the convergence analysis of the PALM \cite{BS14} and that of the AM method (e.g., \cite[Chapter 14]{B17} and \cite[Section 2.7]{B99}) based on the KL property. Here, we give a closed-form  projection onto $\cp_n\cap\cq_n^s$ (see Proposition \ref{pro:proj}), which can be computed with the computational complexity $O(n\log n)$. We also give a  closed-form of $\bar{\bw}_i^{k}$ defined in \eqref{eqw1} (Lemma \ref{lem:wkbar}).
%----------------------
\subsection{Projection onto $\cp_n\cap\cq_n^s$}\label{sec31}
We note that, for any given $\by\in\Rn$,  $\bz=\Pi_{\cp_n\cap\cq_n^s}(\by)$ is the unique solution to the following minimization problem:
\BE\label{projs}
\begin{array}{ll}
 \min\limits_{\substack{\cs\subset [n],\; \card(\cs)=s}}  &  \displaystyle  \frac{1}{2}\|\bz-\by\|^2  \\
 \mbox{s.t.} &\bz\in\cm_\cs^n:=\{\bz\in\Rn\;| \; \sum_{i\in \cs}z_i=1,\; \bz_\cs\ge 0,\;  \bz_{\overline{\cs}}= \bf 0\}.
 \end{array}
\EE

For problem (\ref{projs}), we have  the following result.
\begin{proposition}\label{pro:proj}
Let $\by\in\Rn$ and $1\le s\le n$. Suppose  $\pi=\{\pi(1),\ldots,\pi(n)\}$ is a permutation such that $y_{\pi(1)}\ge y_{\pi(2)}\ge \cdots\ge y_{\pi(n)}$.  Let $\cs_*=\{\pi(1),\ldots,\pi(s)\}\subset [n]$. Then a solution $\bz^*\in\Rn$ to problem {\rm  (\ref{projs})} is given by
\BE\label{proj:sstar}
\bz^*:= \argmin_{\bz\in\cm_{\cs_*}^n}   \frac{1}{2}\|\bz-\by\|^2.
\EE
\end{proposition}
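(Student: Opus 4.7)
The plan is to proceed in two steps. First, I would verify that \eqref{projs} is indeed a reformulation of $\bz=\Pi_{\cp_n\cap\cq_n^s}(\by)$; then I would use an exchange argument to show that the top-$s$ support $\cs_*$ attains the outer minimum.

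For the reformulation, every $\bz\in\cp_n\cap\cq_n^s$ satisfies $\card(\supp(\bz))\le s$, so we can pad $\supp(\bz)$ to some $\cs\subset[n]$ with $\card(\cs)=s$ and then $\bz\in\cm_\cs^n$; conversely every $\bz\in\cm_\cs^n$ with $\card(\cs)=s$ clearly lies in $\cp_n\cap\cq_n^s$. Hence
\[
\min_{\bz\in\cp_n\cap\cq_n^s}\tfrac{1}{2}\|\bz-\by\|^2
\;=\;\min_{\cs\subset[n],\,\card(\cs)=s}\ \min_{\bz\in\cm_\cs^n}\tfrac{1}{2}\|\bz-\by\|^2,
\]
so \eqref{projs} is genuinely the projection problem.

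Now for the exchange step: pick any global minimizer $\bz^\sharp$ of the right-hand side, and suppose $\supp(\bz^\sharp)\not\subset\cs_*$. Then some $i\in\supp(\bz^\sharp)\setminus\cs_*$ satisfies $z_i^\sharp>0$, and by counting (since $\card(\supp(\bz^\sharp))\le s=\card(\cs_*)$) some $j\in\cs_*\setminus\supp(\bz^\sharp)$ exists. Because $i\notin\cs_*$ and $j\in\cs_*$, the sorting forces $y_j\ge y_{\pi(s)}\ge y_{\pi(s+1)}\ge y_i$. Define $\tilde{\bz}$ by transferring the mass, namely $\tilde{z}_j:=z_i^\sharp$, $\tilde{z}_i:=0$, and $\tilde{z}_k:=z_k^\sharp$ for $k\ne i,j$; this vector is still nonnegative, sums to $1$, and has support of size at most $s$. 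A one-line expansion of the squared norm (using $z_j^\sharp=0$) yields the key identity
\[
\tfrac{1}{2}\|\tilde{\bz}-\by\|^2-\tfrac{1}{2}\|\bz^\sharp-\by\|^2
\;=\; z_i^\sharp\,(y_i-y_j)\;\le\;0,
\]
so $\tilde\bz$ is also optimal while $\card(\supp(\tilde\bz)\setminus\cs_*)$ has strictly decreased. Iterating this swap finitely many times produces an optimum whose support is contained in $\cs_*$; since $\cm_{\cs_*}^n$ is closed and convex and the Euclidean projection onto it is unique, that optimum coincides with the $\bz^*$ in \eqref{proj:sstar}.

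The only delicate point is the bookkeeping around the distinction between $\card(\cs)=s$ (which allows some entries of $\bz$ on $\cs$ to vanish) and $\|\bz\|_0\le s$ (the effective support). I would handle this by working with $\supp(\bz^\sharp)$ rather than the surrounding $\cs$ in the exchange step, so no non-degeneracy hypothesis on $\by$ is needed and ties among the $y_i$'s are absorbed harmlessly by the weak inequality $y_j\ge y_i$.
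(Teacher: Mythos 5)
Your proof is correct, and the reformulation step, the existence of a maximizing $j\in\cs_*\setminus\supp(\bz^\sharp)$, and the identity $\tfrac{1}{2}\|\tilde{\bz}-\by\|^2-\tfrac{1}{2}\|\bz^\sharp-\by\|^2=z_i^\sharp(y_i-y_j)\le 0$ all check out. The route is genuinely different from the paper's, though the engine is the same cross-term estimate. The paper does not touch a global minimizer at all: for \emph{each} candidate support $\cs=\{j_1<\cdots<j_s\}$ it takes the restricted optimum $\hat\bz$ over $\cm_\cs^n$, transports it in one shot to $\cm_{\cs_*}^n$ via a permutation matrix ($\hat\bw$ with $\hat w_i=\hat z_{j_i}$), and bounds $\frac12\|\bz^*-\by\|^2\le\frac12\|\hat\bw-\by\|^2\le\frac12\|\hat\bz-\by\|^2$ using $y_i\ge y_{j_i}$ and $\hat z_{j_i}\ge 0$ for all $i$ simultaneously --- a bulk rearrangement inequality rather than your one-coordinate-at-a-time exchange. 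What the paper's version buys is that it needs no existence or termination argument (no appeal to a global minimizer of the combinatorial problem, no induction on $\card(\supp\setminus\cs_*)$), since the comparison is made directly for every $\cs$; what your version buys is that it avoids the permutation-matrix bookkeeping and makes the monotone-transfer mechanism completely explicit, and, as you note, it handles ties among the $y_i$ and degenerate supports ($\card(\supp(\bz))<s$) without any fuss because you work with the true support rather than the padded index set. Both arguments are valid and of comparable length; your closing appeal to uniqueness of the projection onto the closed convex set $\cm_{\cs_*}^n$ is not strictly needed (it suffices that the value $\min_{\bz\in\cm_{\cs_*}^n}\frac12\|\bz-\by\|^2$ equals the global minimum), but it is harmless.
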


\begin{proof}
Without loss of generality,
we assume that $y_1\ge y_2\ge\cdots\ge y_n$.
In this case, $\cs_*=\{1,\ldots,s\}$.
For any $\cs=\{j_1,\ldots,j_s\}\subset [n]$ with $j_1<j_2<\cdots<j_s$, there exists a permutation matrix $P^{(\cs)}\in\Rnn$ such that $\bz\in\cm_\cs^n$ if and only if $\bz=P^{(\cs)}\bw$ for some $\bw\in\cm_{\cs_*}^n$. Let
$\hat{\bz}:= \argmin_{\bz\in\cm_\cs^n}   \frac{1}{2}\|\bz-\by\|^2$. Then we have $\hat{\bz}=P^{(\cs)}\hat{\bw}\in\cm_\cs^n$, where $\hat{\bw}:= \argmin_{\bw\in\cm_{\cs_*}^n}   \frac{1}{2}\|\bw-(P^{(\cs)})^T\by\|^2$.
By hypothesis, $y_1\ge y_2\ge\cdots\ge y_n$ and $j_1<j_2<\cdots<j_s$. Thus, $y_i\ge y_{j_i}$ for $i=1,\ldots,s$. We note that  $\hat{z}_{j_i}\ge 0$ for $i=1,\ldots,s$ since
$\hat{\bz}\in\cm_\cs^n$. Hence, $-\hat{z}_{j_i}y_i \le -\hat{z}_{j_i}y_{j_i}$ for $i=1,\ldots,s$.
By the definitions of $\bz^*$ and $\hat{\bw}$ and using the orthogonal invariance of the vector $2$-norm, we have
\begin{eqnarray*}\label{eq:upb}
\frac{1}{2}\|\bz^*-\by\|^2 &\le& \frac{1}{2}\|\hat{\bw}-\by\|^2
=\frac{1}{2}\|\hat{\bz}-P^{(\cs)}\by\|^2 =\frac{1}{2}\sum_{i=1}^{s}(\hat{z}_{j_i}-y_i)^2+\frac{1}{2}\sum_{i=s+1}^{n}y_i^2 \\
&=& \frac{1}{2}\sum_{i=1}^{s}(\hat{z}_{j_i}^2-2\hat{z}_{j_i}y_i)+\frac{1}{2}\sum_{i=1}^{n}y_i^2
\le \frac{1}{2}\sum_{i=1}^{s}(\hat{z}_{j_i}^2-2\hat{z}_{j_i}y_{j_i})+\frac{1}{2}\sum_{i=1}^{n}y_i^2\\
&=&  \frac{1}{2}\sum_{i=1}^{s}(\hat{z}_{j_i}-y_{j_i})^2+\frac{1}{2}\sum_{i\notin\cs}y_{j_i}^2
= \frac{1}{2}\|\hat{\bz}-\by\|^2.
\end{eqnarray*}
By the arbitrariness of $\cs$,  the proof is complete. % \hfill $\fbox{}$ \vspace*{5mm}
\end{proof}

\begin{remark}
In \cite[Theorem 2.4]{XL16}, Xu et al. gave another way for finding a closed-form projection onto $\cp_n\cap\cq_n^s$.
 \end{remark}

Based on Proposition \ref{pro:proj} and the projection of a vector onto the probability simplex \cite{CY11}, we can find $\bz=\Pi_{\cp_n\cap\cq_n^s}(\by)$ of a given vector $\by\in\Rn$, which  is described in Algorithm \ref{alg:proj}.
%-------------------------------
\begin{algorithm}
\caption{Projection onto $\cp_n\cap\cq_n^s$}\label{alg:proj}
\begin{description}
\item [{Step 0.}] Given a vector $\by=(y_1,\ldots,y_n)^T\in\Rn$ and a permutation $\pi=\{\pi(1),\ldots,\pi(n)\}$ such that $y_{\pi(1)}\ge y_{\pi(2)}\ge \cdots\ge y_{\pi(n)}$.
\item [{Step 1.}]
Find $\rho=\max_j \{j\in[s]\; | \; y_{\pi(j)}-\frac{1}{j} \big(\sum_{r=1}^j y_{\pi(r)}-1\big)>0 \}$ and define $\beta=\frac{1}{\rho}\big(\sum_{r=1}^{\rho} y_{\pi(r)}-1\big)$.
\item [{Step 2.}] Set $\bz:=(z_1,\ldots,z_n)^T\in\Rn$ with $z_{\pi(j)}=\max\{y_{\pi(j)}-\beta,0\}$ for $j=1,\ldots,s$ and $z_{\pi(j)}=0$ for $j=s+1,\ldots,n$.
\end{description}
\end{algorithm}

We point out that the complexity of Algorithm \ref{alg:proj} is $O(n\log n)$, which is dominated by sorting the elements of $\by$.
%---------------------------
\subsection{Global convergence of Algorithm \ref{alg1}} \label{sec33}

In this subsection, we establish the global convergence of  Algorithm \ref{alg1}. In the following, we  present some necessary lemmas.
We first recall the lemma for a continuously differentiable function (\cite{B99,OR70}).
\begin{lemma}\label{lem:dl}
Let $g:\Rn\to \R$ be a continuously differentiable function, where the gradient $\nabla g$ is Lipschitz-continuous with  a Lipschitz constant $L_g$. Then one has
\[
g(\by_2)\le g(\by_1)+\langle\by_2-\by_1,\nabla g(\by_1)\rangle + \frac{L_g}{2}\|\by_2-\by_1\|^2,\quad\forall \by_1,\by_2\in\Rn.
\]
\end{lemma}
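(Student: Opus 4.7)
The plan is to apply the fundamental theorem of calculus along the line segment joining $\by_1$ and $\by_2$, and then to control the resulting integral using the Lipschitz continuity of $\nabla g$.

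First I would introduce the auxiliary one-variable function $\varphi:[0,1]\to\R$ defined by $\varphi(t)=g(\by_1+t(\by_2-\by_1))$. Since $g$ is continuously differentiable on $\Rn$, $\varphi$ is continuously differentiable on $[0,1]$ with $\varphi'(t)=\langle \nabla g(\by_1+t(\by_2-\by_1)),\by_2-\by_1\rangle$. The fundamental theorem of calculus then gives
\[
g(\by_2)-g(\by_1)=\varphi(1)-\varphi(0)=\int_0^1 \langle \nabla g(\by_1+t(\by_2-\by_1)),\by_2-\by_1\rangle\, dt.
\]
Subtracting the inner product $\langle \nabla g(\by_1),\by_2-\by_1\rangle$ from both sides and rewriting it as an integral of the constant $\langle \nabla g(\by_1),\by_2-\by_1\rangle$ over $[0,1]$ yields
\[
g(\by_2)-g(\by_1)-\langle \nabla g(\by_1),\by_2-\by_1\rangle=\int_0^1 \langle \nabla g(\by_1+t(\by_2-\by_1))-\nabla g(\by_1),\by_2-\by_1\rangle\, dt.
\]

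Next I would bound the right-hand side. By the Cauchy–Schwarz inequality the integrand is at most $\|\nabla g(\by_1+t(\by_2-\by_1))-\nabla g(\by_1)\|\cdot\|\by_2-\by_1\|$, and the Lipschitz hypothesis on $\nabla g$ (with constant $L_g$) bounds this by $L_g\cdot t\|\by_2-\by_1\|^2$. Integrating $t$ over $[0,1]$ gives $L_g\|\by_2-\by_1\|^2/2$, and rearranging produces exactly the desired inequality
\[
g(\by_2)\le g(\by_1)+\langle \by_2-\by_1,\nabla g(\by_1)\rangle+\frac{L_g}{2}\|\by_2-\by_1\|^2.
\]

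There is really no substantial obstacle here: this is a textbook descent/quadratic-upper-bound lemma, and the only subtlety is making sure the chain rule is valid (ensured by the $C^1$ assumption) and that the Cauchy–Schwarz plus Lipschitz step is applied inside the integral rather than outside. I would simply present the two displays above and conclude.
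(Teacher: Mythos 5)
Your proof is correct and complete: the fundamental theorem of calculus along the segment, followed by Cauchy--Schwarz and the Lipschitz bound inside the integral, is exactly the canonical argument for this descent lemma. The paper itself offers no proof---it simply recalls the result from its references (Bertsekas; Ortega--Rheinboldt)---and your argument is precisely the standard one found there, so there is nothing to reconcile.
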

%-----------------------------

For the sequence $\{\bar{\bw}_i^{k}\}$ generated by  Algorithm \ref{alg1}, we have the following result.
\begin{lemma}\label{lem:wkbar}
Let $(W^k,H^k)$ be the current iterate generated by  Algorithm {\rm \ref{alg1}}. For $i=1,\ldots,r$, if $\|H^{k}(i,:)\|>0$, then
\begin{small}
\[%\BE\label{exp:hkbar}
\bar{\bw}_i^{k}=\Pi_{\cp_m\cap\cq_m^{s_1}}\Big(\frac{1}{\|H^{k}(i,:)\|^2}U_i^k(H^{k}(i,:))^T\Big)
=\Pi_{\cp_m\cap\cq_m^{s_1}}\Big(\bw_i^k-\frac{1}{\|H^{k}(i,:)\|^2}\nabla_{\bw_i}\phi_k(\bw_i^k)\Big),
\]
\end{small}
where $U_i^k:=V-\sum_{j=1}^{i-1}\bw_j^{k+1}H^k(j,:)-\sum_{j=i+1}^{r}\bw_j^kH^k(j,:).$
\end{lemma}

\begin{proof}
By the definition of $\bar{\bw}_i^{k}$, we have
\begin{eqnarray*}
\bar{\bw}_i^{k}
%&=&\argmin_{\bw_i\in\cp_m\cap\cq_m^{s_1}} f(\bw_1^{k+1},\ldots,\bw_{i-1}^{k+1},\bw_i,\bw_{i+1}^k,\ldots,\bw_r^k, H^k)\\
=\argmin_{\bw_i\in\cp_m\cap\cq_m^{s_1}} \frac{1}{2}\|U_i^k-\bw_iH^{k}(i,:)\|_F^2.
\end{eqnarray*}
Let $1\le i\le r$ be fixed.
By hypothesis, we have $\|H^{k}(i,:)\|>0$.
Then one can construct an orthogonal matrix
\[
Q=\left[\frac{(H^{k}(i,:))^T}{\|H^{k}(i,:)\|},\bq_2,\ldots,\bq_m\right]\in\Rmm.
\]
By \eqref{f:grad} we have
\[
\nabla_{\bw_i}\phi_k(\bw_i^k)=-\big(U_i^k-\bw_i^kH^k(i,:) \big)\big(H^{k}(i,:)\big)^T=
-U_i^k(H^{k}(i,:))^T+ \|H^{k}(i,:)\|^2\bw_i^k.
\]
This, together with the orthogonal invariance of the Frobenius matrix norm, yields
\begin{eqnarray*}
\bar{\bw}_i^{k}&=&\argmin_{\bw_i\in\cp_m\cap\cq_m^{s_1}} \frac{1}{2}\Big\|U_i^kQ-\bw_iH^{k}(i,:)Q\Big\|_F^2\\
%&=& \argmin_{\bw_i\in\cp_m\cap\cq_m^{s_1}} \frac{1}{2}\Big\|\|H^{k}(i,:)\| \bw_i -\frac{1}{\|H^{k}(i,:)\|}U_i^k(H^{k}(i,:))^T\|\Big\|^2\\
&=& \argmin_{\bw_i\in\cp_m\cap\cq_m^{s_1}}\frac{1}{2}\|H^{k}(i,:)\|^2 \Big\|\bw_i- \frac{1}{\|H^{k}(i,:)\|^2}U_i^k(H^{k}(i,:))^T\Big\|^2 \\
&=& \argmin_{\bw_i\in\cp_m\cap\cq_m^{s_1}}\frac{\|H^{k}(i,:)\|^2}{2} \Big\|\bw_i-\Big(\bw_i^k-\frac{1}{\|H^{k}(i,:)\|^2}\nabla_{\bw_i}\phi_k(\bw_i^k)\Big)\Big\|^2.
\end{eqnarray*}
\end{proof}

Lemma \ref{lem:wkbar} shows the minimization in \eqref{eqw1} is in fact a projection onto $\cp_m\cap\cq_m^{s_1}$, which can be computed via Algorithm \ref{alg:proj}.

We also have the following useful properties for Algorithm {\rm \ref{alg1}}.

%-----------------------
\begin{lemma}\label{lem:fwhkn}
Let $\{(W^k,H^k)\}$ be the sequence generated by  Algorithm {\rm \ref{alg1}}. Then for any $k\ge 0$,
\BE\label{ineq:fwkn}
\Phi_{k}(\bw_i^{k})-\Phi_{k}(\bw_i^{k+1})\ge \frac{\delta_1}{2}\|\bw_i^{k}-\bw_i^{k+1}\|^2,
\EE
for $i=1,2,\ldots, r$
and
\begin{eqnarray}\label{ineq:fhkn}
\Psi_{k}(\bh_t^k) -\Psi_{k}(\bh_t^{k+1})\geq \frac{\delta_2}{2}\|\bh_t^k-\bh_t^{k+1}\|^2,
\end{eqnarray}
for $t=1,2,\ldots, n$.
\end{lemma}

\begin{proof}
We first prove that \eqref{ineq:fwkn} holds for all $k\ge 0$.
Let $k\ge 0$ and $1\le i\le r$ be fixed. We establish  \eqref{ineq:fwkn}  in different cases.
If $\|H^{k}(i,:)\|=0$, then $\bw_i^{k+1}=\bw_i^{k}$.
In this case,  it is obvious that  \eqref{ineq:fwkn} holds.

Suppose $\|H^{k}(i,:)\|\neq 0$.
If $\Phi_{k}(\bw_i^k) -\Phi_{k}(\bar{\bw}_i^k)\geq \frac{\delta_1}{2}\|\bw_i^k-\bar{\bw_i}^{k}\|^2$,
then
$\bw_i^{k+1}=\bar{\bw}_i^k$ and thus  \eqref{ineq:fwkn} also holds.
Otherwise, $\bw_i^{k+1}$ is determined by \eqref{eqw2}, i.e.,
\[
\bw_i^{k+1}= \argmin_{\bw_i\in \Rm}\big\{\big\langle \bw_i-\bw_i^k, \nabla\phi_k(\bw_i^k)\big\rangle+\frac{1}{2\mu_{ik}}\|\bw_i-\bw_i^k\|^2
+\delta_{\cp_m}(\bw_i)+\delta_{\cq_m^{s_1}}(\bw_i)\big\}.
\]
Thus,
\begin{eqnarray}\label{dl:fhlkn}
 \delta_{\cp_m}(\bw_i^k)+\delta_{\cq_m^{s_1}}(\bw_i^k) &\ge& \big\langle \bw_i^{k+1}-\bw_i^k, \nabla\phi_k(\bw_i^k)\big\rangle + \frac{1}{2\mu_{ik}}\|\bw_i^{k+1}-\bw_i^k\|^2 \nonumber\\
 && +\delta_{\cp_m}(\bw_i^{k+1}) +\delta_{\cq_m^{s_1}}(\bw_i^{k+1}).\nonumber
\end{eqnarray}
This, together with Lemma \ref{lem:dl} for $g=\phi_k$, yields
\begin{eqnarray*}
&&\phi_{k}(\bw_i^{k+1})+\delta_{\cp_m}(\bw_i^{k+1})+\delta_{\cq_m^{s_1}}(\bw_i^{k+1}) \\
&\le& \phi_{k}(\bw_i^{k}) +\delta_{\cp_m}(\bh_i^k)+\delta_{\cq_m^{s_1}}(\bw_i^k) -\frac{1}{2}\Big(\frac{1}{\mu_{ik}}-\|H^{k}(i,:)\|^2)\Big) \|\bw_i^{k+1}-\bw_i^k\|^2.
\end{eqnarray*}
By hypothesis, $\mu_{ik}=1/(\|H^{k}(i,:)\|^2+\delta_1)$.
Then we obtain  \eqref{ineq:fwkn}.

Next, we show that  \eqref{ineq:fhkn} holds for $t=1,2,\ldots, n$ and for all $k\ge 0$.
Let $1\le l \le r$ and $k\ge 0$ be fixed.
If $\Psi_{k}(\bh_t^{k})-\Psi_{k}(\bar{\bh}_t^{k})\ge \frac{\delta_2}{2}\|\bh_t^{k}-\bar{\bh}_t^{k}\|^2$, then we have $\bh_t^{k+1}=\bar{\bh}_t^{k}$.
In this case, it is natural that \eqref{ineq:fhkn} holds.
Otherwise, $\bh_t^{k+1}$ is given by \eqref{eqh2}, i.e.,
\[
\bh_t^{k+1}=\argmin_{\bh_t\in \Rr}\big\{\big\langle \bh_t-\bh_t^k, \nabla\psi_k(\bh_t^k)\big\rangle +\frac{1}{2\nu_{tk}}\|\bh_t-\bh_t^k\|^2+\delta_{\cp_r}(\bh_t)+\delta_{\cq_r^{s_2}}(\bh_t)\big\},
\]
which gives rise to
\begin{eqnarray}\label{dl:fwhkn}
\delta_{\cp_r}(\bh_t^k)+\delta_{\cq_r^{s_2}}(\bh_t^k) &\ge&
\big\langle \bh_t^{k+1}-\bh_t^k, \nabla\psi_k(\bh_t^k)\big\rangle +\frac{1}{2\nu_{tk}}\|\bh_t^{k+1}-\bh_t^k\|^2  \nonumber\\
&& +\delta_{\cp_r}(\bh_t^{k+1})+\delta_{\cq_r^{s_2}}(\bh_t^{k+1}).\nonumber
\end{eqnarray}
This, together with  Lemma \ref{lem:dl} for $g=\psi_{k}$, yields
\begin{eqnarray*}
&&\psi_{k}(\bh_t^{k+1})+\delta_{\cp_r}(\bh_t^{k+1})+\delta_{\cq_r^{s_2}}(\bh_t^{k+1}) \\
&\le& \psi_k(\bh_t^{k}) +\delta_{\cp_r}(\bh_t^k)+\delta_{\cq_r^{s_2}}(\bh_t^k) -\frac{1}{2}\Big(\frac{1}{\nu_{tk}}-\|(W^{k+1})^TW^{k+1}\|_2\Big)\|\bh_t^{k+1}-\bh_t^k\|^2.
\end{eqnarray*}
By hypothesis, $\nu_{tk}=1/(\|(W^{k+1})^TW^{k+1}\|_2+\delta_2)$. Therefore, we have \eqref{ineq:fhkn}.
\end{proof}

The following lemma shows the monotone decreasing property of the sequence $\{F(W^{k},H^{k})\}$ generated by Algorithm \ref{alg1}.

\begin{lemma}\label{lem:fkn}
The sequence $\{F(W^{k},H^{k})\}$ generated by Algorithm \ref{alg1}  is monotonic decreasing and for all $k\ge 0$,
\[
F(W^k,H^k)-F(W^{k+1},H^{k+1})\ge\frac{\delta_1}{2}\|W^{k+1}-W^k\|_F^2+\frac{\delta_2}{2}\|H^{k+1}-H^k\|_F^2.
\]
\end{lemma}

\begin{proof}  Using  Lemma \ref{lem:fwhkn} and the definitions of $\Phi_{k}$ and $\Psi_{k}$ we have, for any $k\ge 0$,
\begin{small}
\begin{eqnarray*}\label{ineq:fwhkn}
&&F(W^k,H^k)-F(W^{k+1},H^{k+1}) \nonumber \\
%&=&F(W^k,H^k)-F(W^{k+1},H^k)+F(W^{k+1},H^k)-F(W^{k+1},H^{k+1})\nonumber \\
&=&\sum\limits_{i=1}^r \big(\Phi_{k}(\bw_i^{k})-\Phi_{k}(\bw_i^{k+1})\big)
+\sum\limits_{t=1}^n\big(\Psi_{k}(\bh_t^{k})-\Psi_{k}(\bh_t^{k+1})\big) \nonumber \\
&\ge&\frac{\delta_1}{2}\sum\limits_{i=1}^r\|\bw_i^{k+1}-\bw_i^k\|^2+\frac{\delta_2}{2}\sum\limits_{t=1}^n\|\bh_t^{k+1}-\bh_t^k\|^2
=\frac{\delta_1}{2}\|W^{k+1}-W^k\|_F^2+\frac{\delta_2}{2}\|H^{k+1}-H^k\|_F^2.\nonumber
\end{eqnarray*}
\end{small}
This shows that  $\{F(W^{k},H^{k})\}$ is monotonic decreasing and bounded below.
\end{proof}
%----------------------------
\begin{remark}\label{rem:fkn}
From Lemma \ref{lem:fkn}, it follows that, for any integer $q>0$,
\begin{eqnarray*}
&&\sum_{k=0}^q\big\|(W^{k+1},H^{k+1})-(W^{k},H^{k})\big\|_F^2
\le \frac{2}{\delta_{3}} \big(F(W^0,H^0)-F(W^{q+1},H^{q+1}) \big),
%\le  \frac{2}{\delta_{3}} F(W^0,H^0),
\end{eqnarray*}
where $\delta_{3}:=\min\{\delta_1,\delta_2\}$. Hence, $\sum_{k=0}^\infty\big\|(W^{k+1},H^{k+1})-(W^{k},H^{k})\big\|_F^2<\infty$.
\end{remark}

The following lemma gives a subgradient lower bound for the successive iterate gap. We refer to Definition \ref{app:sg} and Lemma \ref{app:ps} for the subgradient of $F$.
\begin{lemma}\label{lem:abk}
Let $\{W^k,H^k\}$ be the sequence generated by Algorithm \ref{alg1}. For any $k\ge 0$, define
$A^{k+1}=[\ba_1^{k+1}, \ldots, \ba_r^{k+1}]$ with
\[
\ba_i^{k+1}:=\nabla_{\bw_i} f(W^{k+1},H^{k+1})-\nabla\phi_k(\bw_i^{k})-\frac{1}{\alpha_{ik}}(\bw_i^{k+1}-\bw_i^k),
\]
for $i=1, \ldots, r$ and $B^{k+1}=[\bb_1^{k+1}, \ldots, \bb_r^{k+1}]$ with
\[
\bb_t^{k+1}:=\nabla_{\bh_t}f(W^{k+1}, H^{k+1})-\nabla\psi_k(\bh_t^{k}) -\frac{1}{\nu_{tk}}(\bh_t^{k+1}-\bh_t^k),
\]
for $t=1, \ldots, n$, where $\alpha_{ik}=1$ if $\|H^{k}(i,:)\|=0$; else
$\alpha_{ik}=1/\|H^{k}(i,:)\|^2$ if
$\Phi_{k}(\bw_i^k) -\Phi_{k}(\bar{\bw}_i^k)\geq \frac{\delta_1}{2}\|\bw_i^k-\bar{\bw}_i^{k}\|^2$
and
$\alpha_{ik}=\mu_{ik}$ if
$\Phi_{k}(\bw_i^k) -\Phi_{k}(\bar{\bw}_i^k)< \frac{\delta_1}{2}\|\bw_i^k-\bar{\bw}_i^{k}\|^2.$
Then $(A^{k+1}, B^{k+1}) \in \partial F(W^{k+1}, H^{k+1})$ and
\BE\label{bd:abk}
\|(A^{k+1},B^{k+1})\|_F\le \sqrt{\delta_4} \|(W^{k+1},H^{k+1})-(W^k,H^k)\|_F,
\EE
where $\delta_4:=\max\{2(2\sqrt{nr}+\|V\|_2)^2+(\delta_2+2r)^2,r(r+1)(2n+\delta_1)^2\}$.
\end{lemma}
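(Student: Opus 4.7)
The plan is to prove the two assertions separately: first, the inclusion $(A^{k+1},B^{k+1})\in\partial F(W^{k+1},H^{k+1})$, and second, the Frobenius-norm estimate by the successive iterate gap. Both parts are driven by the first-order optimality of the projection/proximal steps prescribed in Algorithm \ref{alg1}, together with the explicit partial-gradient formulae \eqref{f:grad}--\eqref{f:gradh}.

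For the inclusion, the block-separable structure of the indicator part of $F$ in \eqref{def:Ffd}, combined with a standard limiting-subdifferential sum rule, gives $\partial_{\bw_i}F(W^{k+1},H^{k+1})=\nabla_{\bw_i}f(W^{k+1},H^{k+1})+\partial\delta_{\cp_r}(\bw_i^{k+1})$ for each $i$, and $\partial_{\bh_t}F(W^{k+1},H^{k+1})=\nabla_{\bh_t}f(W^{k+1},H^{k+1})+\partial\delta_{\cp_n\cap\cq_n^s}(\bh_t^{k+1})$ for each $t$. Observe that both branches of Step 1 produce $\bw_i^{k+1}=\Pi_{\cp_r}(\bw_i^k-\mu_{ik}\nabla\phi_k(\bw_i^k))$, so Fermat's rule applied to the prox surrogate yields $-\nabla\phi_k(\bw_i^k)-\frac{1}{\mu_{ik}}(\bw_i^{k+1}-\bw_i^k)\in\partial\delta_{\cp_r}(\bw_i^{k+1})$, and adding $\nabla_{\bw_i}f(W^{k+1},H^{k+1})$ identifies $\ba_i^{k+1}$ as an element of $\partial_{\bw_i}F$. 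The parallel argument handles $\bb_t^{k+1}$ in Cases (ii)--(iii) of Step 2, using Lemma \ref{lem:hkbar} to recast the AM minimizer $\bar{\bh}_t^k$ as $\Pi_{\cp_n\cap\cq_n^s}(\bh_t^k-\alpha_{tk}\nabla\psi_k(\bh_t^k))$ with $\alpha_{tk}=1/\|W^{k+1}(:,t)\|^2$. In Case (i), formula \eqref{f:gradh} forces $\nabla_{\bh_t}f(W^{k+1},H^{k+1})=0$ and $\nabla\psi_k(\bh_t^k)=0$, so $\bb_t^{k+1}=0\in\partial_{\bh_t}F$ trivially.

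For the norm bound, I will substitute \eqref{f:grad}--\eqref{f:gradh} into the definitions and telescope. For $\ba_i^{k+1}$, the decomposition $\nabla_{\bw_i}f(W^{k+1},H^{k+1})-\nabla\phi_k(\bw_i^k)=H^{k+1}(H^{k+1})^T(\bw_i^{k+1}-\bw_i^k)+[H^{k+1}(H^{k+1})^T-H^k(H^k)^T]\bw_i^k-(H^{k+1}-H^k)\bv_i$, combined with the residual term $\frac{1}{\mu_{ik}}(\bw_i^{k+1}-\bw_i^k)$, yields a linear estimate in $\|\bw_i^{k+1}-\bw_i^k\|$ and $\|H^{k+1}-H^k\|_F$ once one invokes the stochasticity bounds $\|H^k\|_2\le\sqrt{r}$, $\|\bw_i^k\|\le 1$, $\|\bv_i\|\le\|V\|_2$, and the stepsize bound $1/\mu_{ik}\le r+\delta_1$ from \eqref{mu:bd1}--\eqref{mu:bd2}. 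For $\bb_t^{k+1}$, since $\psi_k$ is already evaluated at $W^{k+1}$, only the $\bh_j$-increments appear: the $j=t$ term carries coefficient $\|W^{k+1}(:,t)\|^2+\frac{1}{\alpha_{tk}}\le 2m+\delta_2$, while the $j>t$ terms carry coefficients $\|W^{k+1}(:,t)\|\|W^{k+1}(:,j)\|\le m$. Squaring via $(a_1+\cdots+a_\ell)^2\le\ell\sum a_j^2$ and summing over $i$ and $t$ produces the two alternatives assembled in $\delta_4$, including the $r(r+1)/2$ combinatorial factor that arises from the double sum in the $\bb_t$ estimate.

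The main technical obstacle is the constant bookkeeping: merging the three branches of Step 2 into a single uniform estimate on $\bb_t^{k+1}$, carefully distributing the factors of $m$, $r$ that appear from summation so that the aggregated constant matches the prescribed $\delta_4$, and verifying that the limiting-subdifferential sum rule applies cleanly to the non-convex set $\cp_n\cap\cq_n^s$. The last point is routine here because $\bh_t^{k+1}$ is defined as a global minimizer of a prox surrogate, so the Fermat rule produces a limiting-normal-cone element without any constraint qualification. Once these pieces are in place, the two claims follow by direct assembly.
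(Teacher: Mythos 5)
Your proposal is correct and follows essentially the same route as the paper's proof: Fermat's rule applied to the prox/projection surrogates (with Lemma \ref{lem:hkbar} recasting the AM step as a projection, and the three-case analysis for $\bb_t^{k+1}$) gives the inclusion $(A^{k+1},B^{k+1})\in\partial F(W^{k+1},H^{k+1})$, and the same telescoping of the gradient differences with the bounds $\|H^k\|_2^2\le r$, $\|W^{k+1}(:,j)\|^2\le m$, $1/\mu_{ik}\le r+\delta_1$, $1/\alpha_{tk}\le m+\delta_2$ yields the estimate, including the $r(r+1)/2$ factor from the nested sum. The only cosmetic difference is that you bound $\ba_i^{k+1}$ row by row while the paper works with $A^{k+1}$ in matrix form via $D^k=\diag(1/\mu_{1k},\ldots,1/\mu_{mk})$; the constants assemble to the same $\delta_4$.
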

\begin{proof}
Let $k\ge 0$  be fixed.
First, we show that $A^{k+1}\in\partial_W F(W^{k+1}, H^{k+1})$ by proving that $\ba_i^{k+1}\in \partial_{\bw_i} F(W^{k+1}, H^{k+1})$ for $i=1, \ldots, r$. For any fixed $1\le i\le r$, we show that $\ba_i^{k+1}\in \partial_{\bw_i} F(W^{k+1}, H^{k+1})$ in different cases. If  $\|H^{k}(i,:)\|=0$, then we have
\[
\nabla\phi_k(\bw_i^k)=-\big(U_i^k-\bw_i^k H^{k}(i,:)\big)(H^{k}(i,:))^T=\bf 0.
\]
where $U_i^k$ is defined as in Lemma \ref{lem:wkbar}.  By Algorithm \ref{alg1}, we have $\bw_i^{k+1}=\bw_i^{k}$. Since ${\bf 0}$ $\in \partial\delta_{\cp_m}(\bw_i^{k+1})$ and ${\bf 0}\in \partial\delta_{\cq_m^{s_1}}(\bw_i^{k+1})$, we know that $\nabla_{\bw_i} f(W^{k+1},H^{k+1})\in  \partial_{\bw_i} F(W^{k+1}, H^{k+1})$. Thus,
$\ba_i^{k+1}= \nabla_{\bw_i} f(W^{k+1},H^{k+1})\in \partial_{\bw_i} F(W^{k+1}, H^{k+1})$.

Suppose $\|H^{k}(i,:)\|\neq 0$. If $\Phi_k(\bw_i^k) -\Phi_k(\bar{\bw}_i^k)\geq \frac{\delta_1}{2}\|\bw_i^k-\bar{\bw}_i^{k}\|^2$, then $\bw_i^{k+1}=\bar{\bw}_i^k$. By Lemma \ref{lem:wkbar} we have
\[
\bar{\bw}_i^{k}=\Pi_{\cp_m\cap\cq_m^{s_1}}\Big(\bw_i^k-\frac{1}{\|H^{k}(i,:)\|^2}\nabla\phi_k(\bw_i^k)\Big),
\]
which is a solution to the following minimization problem:
\[
\bw_i^{k+1}= \argmin_{\bw_i\in\Rm}\big\{\big\langle \bw_i-\bw_i^k, \nabla\phi_k(\bw_i^k)\big\rangle +\frac{1}{2\alpha_{ik}}\|\bw_i-\bw_i^k\|^2+\delta_{\cp_m}(\bw)+\delta_{\cq_m^{s_1}}(\bw)\big\},
\]
where $\alpha_{ik}=1/\|(H^{k}(i,:))^T \|^2$. Then there exists an element $\xi_i^{k+1}\in\partial\delta_{\cp_m}(\bw_i^{k+1})$ and $\zeta_i^{k+1}$ $ \in \partial\delta_{\cq_m^{s_1}}(\bw_i^{k+1})$ such that
\[
\nabla\phi_k(\bw_i^k)+\frac{1}{\alpha_{ik}}(\bw_i^{k+1}-\bw_i^k)+\xi_i^{k+1}+\zeta_i^{k+1}=\bf 0.
\]
Therefore, ${\ba}_i^{k+1}=\nabla_{\bw_i} f(W^{k+1},H^{k+1})+\xi_i^{k+1} +\zeta_i^{k+1}\in  \partial_{\bw_i} F(W^{k+1}, H^{k+1})$.

On the other hand, if $\Phi_{k}(\bw_i^k) -\Phi_{k}(\bar{\bw}_i^k)< \frac{\delta_1}{2}\|\bw_i^k-\bar{\bw}_i^{k}\|^2$, then we have by \eqref{eqw2},
\[
\bw_i^{k+1} = \Pi_{\cp_m\cap\cq_m^{s_1}} \Big(\bw_i^{k}-\mu_{ik} \nabla\phi_{k}(\bw_i^k)\Big),
\]
which is a solution to the following minimization problem:
\[
\bw_i^{k+1}= \argmin_{\bw_i\in\Rm}\big\{\big\langle \bw_i-\bw_i^k, \nabla\phi_k(\bw_i^k)\big\rangle+\frac{1}{2\alpha_{ik}}\|\bw_i-\bw_i^k\|^2 +\delta_{\cp_m}(\bw)+\delta_{\cq_m^{s_1}}(\bw)\big\},
\]
where  $\alpha_{ik}=\mu_{ik}$. Then there exists an element $\vartheta_i^{k+1}\in\partial\delta_{\cp_m}(\bw_i^{k+1})$ and $\varsigma_i^{k+1}\in\partial\delta_{\cq_m^{s_1}}(\bw_i^{k+1})$ such that
\[
\nabla\phi_k(\bw_i^k)+\frac{1}{\alpha_{ik}}(\bw_i^{k+1}-\bw_i^k)+\vartheta_i^{k+1} +\varsigma_i^{k+1}=\bf 0.
\]
Therefore, ${\ba}_i^{k+1}=\nabla_{\bw_i} f(W^{k+1},H^{k+1})+\vartheta_i^{k+1} +\varsigma_i^{k+1} \in  \partial_{\bw_i} F(W^{k+1}, H^{k+1})$.

Next, we show that $B^{k+1} \in \partial_H F(W^{k+1}, H^{k+1})$. For any fixed $1\le t\le n$, it is easy to see that  the iterate $\bh_t^{k+1}$ defined in Algorithm \ref{alg1} is a solution to the following minimization problem:
\[%\BE\label{pro:smf}
\min\limits_{\bh_t\in\Rr} \big\langle \bh_t-\bh_t^{k}, \nabla\psi_k(\bh_{t}^k)\big\rangle+\frac{1}{2\nu_{tk}}\|\bh_t-\bh_t^{k}\|^2+\delta_{\cp_r}(\bh_t)+\delta_{\cq_r^{s_2}}(\bh_t).
\]
Then there exists an element $\theta_t^{k+1}\in\partial \delta_{\cp_r}(\bh_t^{k+1})$ and $\eta_t^{k+1}\in\partial\delta_{\cq_r^{s_2}}(\bh_t^{k+1})$ such that
\[
\nabla\psi_k(\bh_t^{k})+\frac{1}{\nu_{tk}}(\bh_t^{k+1}-\bh_t^{k})+\theta_t^{k+1}+\eta_t^{k+1}=\bf 0.
\]
Hence, $\bb_t^{k+1}=\nabla_{\bh_t} f(W^{k+1},H^{k+1})+\theta_t^{k+1} +\eta_t^{k+1}\in  \partial_{\bh_t} F(W^{k+1}, H^{k+1})$.

Finally, we show that  \eqref{bd:abk} holds.
By the definition of $\ba_i^{k+1}$ and $\phi_k$ and
using  \eqref{f:grad} we have, for any $1\le i\le r$,
\[
\ba_i^{k+1}=\bu_i^{k+1}+\bg_i^{k+1},
\]
where $\bu_i^{k+1}=W^{k+1}\big(H^{k+1}(H^{k+1}(i,:))^T-H^k(H^k(i,:))^T\big)+V\big((H^k(i,:))^T-(H^{k+1}(i,:))^T\big)$ and $\bg_i^{k+1}=\big(\sum_{j=i}^{r}(\bw_j^{k+1}-\bw_j^k)H^{k}(j,:)\big)(H^{k}(i,:))^T
-  \frac{1}{\alpha_{ik}}(\bw_i^{k+1}-\bw_i^k)$. Thus,
\[
A^{k+1}=U^{k+1}+G^{k+1},\quad U^{k+1}:=[\bu_1^{k+1}, \ldots, \bu_r^{k+1}],\quad G^{k+1}:=[\bg_1^{k+1}, \ldots, \bg_r^{k+1}].
\]
By the definition of $\ba_i^{k+1}$ we obtain
\begin{eqnarray*}
\|\bg_i^{k+1}\|&\le&\Big(\sum\limits_{j=i}^{r}\|\bw_j^{k+1}-\bw_j^k\| \|H^{k}(j,:)\|\Big)\|H^{k}(i,:)\|
+ (\|H^{k}(i,:)\|^2+\delta_1)\|\bw_i^{k+1}-\bw_i^k\| \\
%&\le& n \sum\limits_{j=i}^{r}\|\bw_i^{k+1}-\bw_i^k\|_F+(n+\delta_1)\|\bw_i^{k+1}-\bw_i^k\|_F
&\le& (2n+\delta_1) \sum\limits_{j=i}^{r}\|\bw_i^{k+1}-\bw_i^k\|_F,
\end{eqnarray*}
where the second inequality uses the fact that   $\|H^{k}(i,:)\|^2\leq n$. Hence,
\begin{eqnarray*} \label{ineqforw2}
\|G^{k+1}\|_F^2 & =& \sum\limits_{i=1}^{r}\|\bg_i^{k+1}\|^2
\le \sum\limits_{i=1}^{r} (2n+\delta_1)^2 \Big(\sum\limits_{j=i}^{r}\|\bw_j^{k+1}-\bw_j^k\|\Big)^2  \nonumber\\
&\le& (2n+\delta_1)^2 \sum\limits_{i=1}^{r} (r-i+1)\sum\limits_{j=i}^{r}\|\bw_j^{k+1}-\bw_j^k\|^2  \nonumber\\
&\le& \frac{1}{2}r(r+1) (2n+\delta_1)^2 \|W^{k+1}-W^k\|_F^2.
\end{eqnarray*}
In addition, by the  definition of $\bu_i^{k+1}$ we have
\begin{eqnarray*} \label{ineqforw1}
\|U^{k+1}\|_F & =& \|W^{k+1}\big(H^{k+1}(H^{k+1})^T-H^k(H^k)^T\big) +V(H^k-H^{k+1})^T\|_F\nonumber\\
&\le&\|W^{k+1}\|_2\|H^{k+1}\|_2\|H^{k+1}-H^{k}\|_F\nonumber\\
&&+\|W^{k+1}\|_2\|H^{k+1}-H^{k}\|_F\|H^{k}\|_2+\|V\|_2\|H^{k+1}-H^k\|_F\nonumber\\
&\le& (2\sqrt{nr}+\|V\|_2) \|H^{k+1}-H^k\|_F,
\end{eqnarray*}
where the second inequality uses the fact that   $\|W^{k+1}\|_2^2\leq r$ and $\|H^{k}\|_2^2\le n$.
Thus,
\begin{eqnarray} \label{ineq:bk1}
\|A^{k+1}\|_F^2 & \le&  2\|U^{k+1}\|_F^2+2\|G^{k+1}\|_F^2\nonumber\\
&\le& 2(2\sqrt{nr}+\|V\|_2)^2 \|H^{k+1}-H^k\|_F^2+r(r+1)(2n+\delta_1)^2 \|W^{k+1}-W^k\|_F^2.
\end{eqnarray}

By the definition of $B^{k+1}$ and $\psi_k$ we have
\[
B^{k+1}=\nabla_{H} f(W^{k+1},H^{k+1})-\nabla_{H} f(W^{k+1},H^{k})-D^k(H^{k+1}-H^k),
\]
where $D^k=\diag(1/\nu_{1k},\ldots,1/\nu_{nk})$.
By the definition of $\nu_{tk}$ and using  \eqref{mu:bd1} and \eqref{mu:bd2} we have
\BE\label{dk:bd}
\|W^k\|_2^2\le r\quad\mbox{and}\quad\|D^k\|_2 \leq r+\delta_2.
\EE
From \eqref{h:2n} and \eqref{dk:bd} we have
\begin{eqnarray}\label{ineq:ak1}
\|B^{k+1}\|_F &\le& \|\nabla_H f(W^{k+1},H^{k+1})-\nabla_H f(W^{k+1},H^{k})\|_F+\|D^k(H^{k+1}-H^k)\|_F \nonumber\\
%&=&\|(W^{k+1})^T(W^{k+1}H^{k+1}-V) - (W^{k+1})^T(W^{k+1}H^k-V)\|_F+ \|D^k(H^{k+1}-H^k)\|_F \nonumber\\
&\leq&\|W^{k+1}\|_2^2\|H^{k+1}-H^k\|_F+\|D^k\|_2\|H^{k+1}-H^k\|_F \nonumber\\
& \leq&(\delta_2+2r)\|H^{k+1}-H^k\|_F.
\end{eqnarray}
Therefore, \eqref{bd:abk}  follows from  \eqref{ineq:bk1} and \eqref{ineq:ak1}.
\end{proof}

In the following, we set $\cl(W^0, H^0)$ to be the set of all accumulation points of the sequence $\{(W^k, H^k)\}$ generated by Algorithm \ref{alg1}.
Regarding the set $\cl(W^0, H^0)$, we have the following result.

\begin{lemma}\label{lem:ly0}
Let $\{(W^k, H^k)\}$  be the sequence generated by Algorithm \ref{alg1}. Then
$\cl(W^0, H^0)$ is a nonempty and compact set and the function $F(W, H)$ is finite and constant on $\cl(W^0, H^0)$.
\end{lemma}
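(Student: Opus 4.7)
The plan is to exploit three facts in tandem: the feasible set $\mathcal{G}_1\times(\mathcal{G}_2\cap\mathcal{G}_3)$ is compact, $f$ is continuous, and the sequence $\{F(W^k,H^k)\}$ is monotone and bounded below by Lemma \ref{lem:fk}. From these, nonemptiness, compactness, and the constancy of $F$ on $\mathcal{L}(W^0,H^0)$ will follow by essentially standard arguments.

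First I will verify boundedness of $\{(W^k,H^k)\}$: by construction every iterate lies in $\mathcal{G}_1\times(\mathcal{G}_2\cap\mathcal{G}_3)$, so every entry of $W^k$ and $H^k$ lies in $[0,1]$, which gives a uniform Frobenius-norm bound. The Bolzano--Weierstrass theorem then gives $\mathcal{L}(W^0,H^0)\neq\emptyset$. For compactness, I write
\[
\mathcal{L}(W^0,H^0)=\bigcap_{q\ge 0}\overline{\bigcup_{k\ge q}\{(W^k,H^k)\}},
\]
which is an intersection of closed sets hence closed, and already bounded, so compact.

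Next I need the accumulation points to actually lie in the feasible set, so that $F$ reduces to $f$ there. The sets $\mathcal{G}_1$, $\mathcal{G}_2$ are clearly closed. The set $\mathcal{G}_3$ is closed because the $\ell_0$-sublevel set $\mathcal{Q}_n^s=\{\bx\in\R^n:\|\bx\|_0\le s\}$ is closed: if $\bx^j\to\bx$ with $\|\bx^j\|_0\le s$, then by lower semicontinuity of $\|\cdot\|_0$ one has $\|\bx\|_0\le\liminf_j\|\bx^j\|_0\le s$. Thus any $(W^*,H^*)\in\mathcal{L}(W^0,H^0)$ is feasible, so $\delta_{\mathcal{G}_1}(W^*)=\delta_{\mathcal{G}_2\cap\mathcal{G}_3}(H^*)=0$, and by \eqref{def:Ffd} we get $F(W^*,H^*)=f(W^*,H^*)$.

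Finally I deal with finiteness and constancy. By Lemma \ref{lem:fk} the sequence $\{F(W^k,H^k)\}$ is monotonically decreasing; since $F(W^k,H^k)=f(W^k,H^k)\ge 0$ for every $k$, it is bounded below, hence it converges to some finite limit $F^*$. Pick any $(W^*,H^*)\in\mathcal{L}(W^0,H^0)$ and a subsequence $(W^{k_j},H^{k_j})\to(W^*,H^*)$. By continuity of $f$,
\[
F(W^{k_j},H^{k_j})=f(W^{k_j},H^{k_j})\longrightarrow f(W^*,H^*)=F(W^*,H^*),
\]
and on the other hand $F(W^{k_j},H^{k_j})\to F^*$. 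Therefore $F(W^*,H^*)=F^*$ for every accumulation point, which shows that $F$ is finite and constant on $\mathcal{L}(W^0,H^0)$. The only subtle point, and the one I would check carefully, is the closedness of $\mathcal{Q}_n^s$, since without it the indicator $\delta_{\mathcal{Q}_n^s}$ could jump at limits and ruin the reduction $F=f$ on $\mathcal{L}(W^0,H^0)$; once this is in place, the remainder is a routine application of continuity plus monotonicity of the function values.
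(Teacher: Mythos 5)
Your proof is correct and follows essentially the same route as the paper's: boundedness of the iterates gives nonemptiness and compactness of $\cl(W^0,H^0)$, and the monotone convergence of $\{F(W^k,H^k)\}$ from Lemma \ref{lem:fk} combined with continuity along convergent subsequences gives constancy of the limit value. The only difference is that you explicitly justify the step the paper treats as obvious, namely that accumulation points remain feasible (via closedness of $\cg_1$, $\cg_2$, and $\cq_n^s$) so that $F$ agrees with the continuous function $f$ there; this is a worthwhile detail but not a different argument.
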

\begin{proof}
It is obvious that $\cl(W^0, H^0)$ is nonempty and compact since $\{(W^k, H^k)\}$ is bounded.
By Lemma \ref{lem:fkn} we know that $\{F(W^k, H^k)\}$ converges to a finite
limit $F_*$.
For any $(W^*, H^*)\in\cl(W^0, H^0)$, there exists a subsequence $\{(W^{k_q}, H^{k_q})\}$ such that $\lim_{q\to\infty}(W^{k_t}, H^{k_t})=(W^*, H^*)$. Thus,
$F(W^*, H^*)=\lim_{q\to\infty}F(W^{k_q}, H^{k_q})=F_*$.
Hence, $F$ is finite and constant on $\cl(W^0, H^0)$.
\end{proof}
%------------------------------------

On the global convergence of Algorithm \ref{alg1}, we have the following result.
\begin{theorem}\label{thm:gc}
Let  $\{(W^k, H^k)\}$  be the sequence generated by Algorithm \ref{alg1}. Then every accumulation point of $\{(W^k, H^k)\}$ is a critical point of $F$.
\end{theorem}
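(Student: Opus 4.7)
The plan is to combine the sufficient descent of Lemma \ref{lem:fk} (via Remark \ref{rem:fk}), the subgradient bound of Lemma \ref{lem:abk}, and the closedness of the limiting subdifferential of the lsc function $F$, to pass to the limit along a convergent subsequence and conclude $0\in\partial F(W^*,H^*)$.

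First, I would fix any accumulation point $(W^*,H^*)\in\cl(W^0,H^0)$ and a subsequence $\{(W^{k_q},H^{k_q})\}$ with $(W^{k_q},H^{k_q})\to (W^*,H^*)$. From the telescoping inequality in Lemma \ref{lem:fk} (equivalently Remark \ref{rem:fk}), the successive differences satisfy
\[
\|(W^{k+1},H^{k+1})-(W^{k},H^{k})\|_F\longrightarrow 0,
\]
so $(W^{k_q+1},H^{k_q+1})\to (W^*,H^*)$ as well. This shift by one index is needed to use the subgradient construction of Lemma \ref{lem:abk}, which assigns an explicit element of $\partial F(W^{k+1},H^{k+1})$.

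Next, I would invoke Lemma \ref{lem:abk}: there exist $(A^{k_q+1},B^{k_q+1})\in\partial F(W^{k_q+1},H^{k_q+1})$ with
\[
\|(A^{k_q+1},B^{k_q+1})\|_F\le\sqrt{\delta_4}\,\|(W^{k_q+1},H^{k_q+1})-(W^{k_q},H^{k_q})\|_F,
\]
so $(A^{k_q+1},B^{k_q+1})\to (0,0)$. To conclude $(0,0)\in\partial F(W^*,H^*)$ I would appeal to the standard closedness property of the limiting subdifferential of a proper lsc function: if $(X^j,Y^j)\to (X^*,Y^*)$, $F(X^j,Y^j)\to F(X^*,Y^*)$, and $(A^j,B^j)\in\partial F(X^j,Y^j)$ with $(A^j,B^j)\to (A^*,B^*)$, then $(A^*,B^*)\in\partial F(X^*,Y^*)$. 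The hypothesis $F(X^j,Y^j)\to F(X^*,Y^*)$ is exactly what Lemma \ref{lem:ly0} provides along the subsequence (since $F$ is constant on $\cl(W^0,H^0)$ and equals the limit of the monotone sequence $\{F(W^k,H^k)\}$).

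The only delicate point I anticipate is justifying the value-convergence $F(W^{k_q+1},H^{k_q+1})\to F(W^*,H^*)$, because $F$ contains the nonsmooth indicators $\delta_{\cp_r}$, $\delta_{\cp_n}$, $\delta_{\cq_n^s}$ and only lower semicontinuity is free. Lower semicontinuity gives $F(W^*,H^*)\le\liminf_q F(W^{k_q+1},H^{k_q+1})$; for the reverse inequality I would use that the iterates satisfy $W^k\in\cg_1$ and $H^k\in\cg_2\cap\cg_3$ for all $k$, so each indicator vanishes along the sequence and $F(W^{k+1},H^{k+1})=f(W^{k+1},H^{k+1})$; since $f$ is continuous and $\{F(W^k,H^k)\}$ converges (Lemma \ref{lem:fk}) to the common value $F_*=F(W^*,H^*)$ established in Lemma \ref{lem:ly0}, we obtain $\lim_q F(W^{k_q+1},H^{k_q+1})=F(W^*,H^*)$. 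With this value-convergence in hand, the closedness of $\partial F$ yields $(0,0)\in\partial F(W^*,H^*)$, i.e., $(W^*,H^*)$ is a critical point of $F$, completing the proof.
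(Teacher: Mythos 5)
Your proposal is correct and follows essentially the same route as the paper, which simply defers to \cite[Lemma 5(i)]{BS14}: you combine the vanishing of successive differences from Remark \ref{rem:fk}, the explicit subgradients and their bound from Lemma \ref{lem:abk}, the value-convergence from Lemma \ref{lem:ly0} (with the indicators vanishing since all iterates stay feasible), and the closedness of the limiting subdifferential. Your explicit handling of the index shift and of the convergence of the function values along the subsequence is precisely the content the paper leaves implicit in its citation.
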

\begin{proof} The proof is similar to that of \cite[Lemma 5(i)]{BS14} by using Remark \ref{rem:fkn} and  Lemmas \ref{lem:abk}--\ref{lem:ly0}.
\end{proof}
%------------------------------------

Finally, on the convergence of the sequence $\{(W^k, H^k)\}$ generated by Algorithm \ref{alg1}, we have the following result. The proof follows from the arguments  similar to that of \cite[Theorem 1]{BS14} by using Lemmas
\ref{lem:fwhkn}--\ref{lem:fkn}, Lemmas \ref{lem:abk}--\ref{lem:ly0}, and  Theorem \ref{thm:gc}. Hence, we omit it here.
%------------------------------------
\begin{theorem}\label{thm:whk-conv}
Let  $\{(W^k, H^k)\}$  be the sequence generated by Algorithm \ref{alg1}. Then the sequence $\{(W^k, H^k)\}$ converges to a critical point of $F$.
\end{theorem}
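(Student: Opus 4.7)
The plan is to follow the standard Kurdyka--{\L}ojasiewicz (KL) convergence framework of \cite{BS14}, as the excerpt itself suggests, and adapt it to our row-wise scheme. The three ingredients are already in hand: (i) the sufficient decrease property from Lemma \ref{lem:fk}, which gives
\[
F(W^k,H^k) - F(W^{k+1},H^{k+1}) \geq \tfrac{\delta_3}{2}\|(W^{k+1},H^{k+1}) - (W^k,H^k)\|_F^2,
\]
with $\delta_3 := \min\{\delta_1,\delta_2\}$; (ii) the subgradient lower bound from Lemma \ref{lem:abk}, which exhibits an element of $\partial F(W^{k+1},H^{k+1})$ whose Frobenius norm is at most $\sqrt{\delta_4}\,\|(W^{k+1},H^{k+1}) - (W^k,H^k)\|_F$; and (iii) the identification of accumulation points as critical points (Theorem \ref{thm:gc}) on which $F$ is finite and constant (Lemma \ref{lem:ly0}).

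First, I would note that the iterates remain in the compact set $\cg_1\times(\cg_2\cap\cg_3)$, so $\cl(W^0,H^0)$ is nonempty and compact. Next, I would verify that $F$ is a KL function: $f$ is polynomial, the indicators $\delta_{\cp_r}$ and $\delta_{\cp_n}$ are indicators of polyhedral (hence semi-algebraic) sets, and $\delta_{\cq_n^s}$ is semi-algebraic since $\|\cdot\|_0$ is (as discussed after \eqref{def:Ffd}). Thus $F$ is semi-algebraic and satisfies the KL property.

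Third, I would invoke the uniformized KL property on $\cl(W^0,H^0)$: since $F \equiv F_*$ on this compact set, there exist $\varepsilon,\eta>0$ and a concave desingularizing function $\varphi$ such that
\[
\varphi'\!\bigl(F(W,H) - F_*\bigr)\cdot \dist\bigl(0, \partial F(W,H)\bigr) \geq 1
\]
for every $(W,H)$ in an $\varepsilon$-neighborhood of $\cl(W^0,H^0)$ satisfying $F_* < F(W,H) < F_* + \eta$. Feeding (i) and (ii) into this inequality and exploiting concavity of $\varphi$ in the standard way yields a telescoping estimate of the form
\[
2\,\|(W^{k+1},H^{k+1}) - (W^k,H^k)\|_F \leq \|(W^k,H^k) - (W^{k-1},H^{k-1})\|_F + C\bigl(\Delta_k - \Delta_{k+1}\bigr),
\]
where $\Delta_k := \varphi(F(W^k,H^k) - F_*)$ and $C$ depends only on $\delta_3,\delta_4$. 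Summing over $k$ gives $\sum_{k\ge 0}\|(W^{k+1},H^{k+1}) - (W^k,H^k)\|_F < \infty$, so $\{(W^k,H^k)\}$ is a Cauchy sequence and hence convergent, and by Theorem \ref{thm:gc} its limit is a critical point of $F$.

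The main obstacle is essentially bookkeeping: because both Step~1 and Step~2 of Algorithm \ref{alg1} involve a case distinction (the sufficient-decrease acceptance versus the proximal-gradient fallback, together with the null-column branch in Step~2), one must check that the uniform bounds in Lemmas \ref{lem:fwhk}--\ref{lem:abk} hold across \emph{every} branch. Once this uniformity is in place, the KL argument proceeds identically to \cite[Theorem~1]{BS14}, so no further novel ingredient is required.
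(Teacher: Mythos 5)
Your proposal is correct and follows essentially the same route as the paper: the paper's own proof of Theorem \ref{thm:whk-conv} simply invokes the argument of \cite[Theorem 1]{BS14} together with Lemmas \ref{lem:fwhk}--\ref{lem:fk}, Lemmas \ref{lem:abk}--\ref{lem:ly0}, and Theorem \ref{thm:gc}, which is exactly the sufficient-decrease/subgradient-bound/uniformized-KL/telescoping machinery you spell out. Your explicit check that the bounds hold uniformly across all branches of Steps 1 and 2 is a worthwhile detail that the paper leaves implicit.
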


The following theorem shows that the sequence $\{(W^k, H^k)\}$  generated by Algorithm \ref{alg1} converges to  a special critical point of $F$.
%--------------------------------
\begin{theorem}\label{thm:cp}
Let  $\{(W^k, H^k)\}$  be the sequence generated by Algorithm \ref{alg1}, which converges to  $(W^*, H^*)$. Then we have, for any $1\le i\le r$,
\BE\label{bdwres}
F(W^*,H^*)\le F( \bw_1^{*}, \ldots, \bw_{i-1}^{*}, \bw_{i}, \bw_{i+1}^*, \ldots, \bw_r^*, H^*)
+\delta_1,
\EE
for all $\bw_i\in\cp_m\cap\cq_m^{s_1}$.% and for any $1\le t\le n$,
%\BE\label{bd:fw}
%F(W^*,H^*)\le F(W^*, \bh_1^{*}, \ldots, \bh_{t-1}^{*}, \bh_{t}, \bh_{t+1}^*, \ldots, \bh_n^*)
%+\delta_2,
%\EE
%for all $\bh_t\in\cp_r\cap\cq_r^{s_2}$.
\end{theorem}
\begin{proof}
%We first show \eqref{bdwres}.
Let $1\le i\le r$ be fixed. If  $\|H^{k}(i,:)\|=0$, then, by Step 1 of Algorithm \ref{alg1} we have $\bw_i^{k+1}=\bw_i^{k}$ and
\begin{eqnarray}\label{ineq:fht-1}
&&F(\bw_1^{k+1}, \ldots, \bw_{i-1}^{k+1}, \bw_{i}^{k+1}, \bw_{i+1}^k, \ldots, \bw_r^k, H^k) \nonumber\\
&=& F(\bw_1^{k+1}, \ldots, \bw_{i-1}^{k+1}, \bw_{i}, \bw_{i+1}^k, \ldots, \bw_r^k, H^k),
\end{eqnarray}
for all $\bw_i\in\cp_m\cap\cq_m^{s_1}$.

Suppose $\|H^{k}(i,:)\|\neq 0$. If $\Phi_k(\bw_i^k) -\Phi_k(\bar{\bw}_i^k)\geq \frac{\delta_1}{2}\|\bw_i^k-\bar{\bw}^{k}\|^2$, then
\[
\bw_i^{k+1}=\bar{\bw}_i^k=\argmin_{\bw_i\in\Rm}F( \bw_1^{k+1}, \ldots, \bw_{i-1}^{k+1}, \bw_{i}, \bw_{i+1}^k, \ldots, \bw_r^k, H^k).
\]
Thus,
\begin{eqnarray}\label{ineq:fht-2}
&&F(\bw_1^{k+1}, \ldots, \bw_{i-1}^{k+1}, \bw_{i}^{k+1}, \bw_{i+1}^k, \ldots, \bw_r^k, H^k)  \nonumber\\
&\leq& F(\bw_1^{k+1}, \ldots, \bw_{i-1}^{k+1}, \bw_{i}, \bw_{i+1}^k, \ldots, \bw_r^k, H^k),
\end{eqnarray}
for all $\bw_i\in\cp_m\cap\cq_m^{s_1}$.

On the other hand, if $\Phi_{k}(\bw_i^k) -\Phi_{k}(\bar{\bw}_i^k)< \frac{\delta_1}{2}\|\bw_i^k-\bar{\bw}_i^{k}\|^2$, then  $\bw_i^{k+1}$ is determined by  \eqref{eqw2}, i.e.,
\[
\bw_i^{k+1}= \argmin_{\bw_i\in \cp_m\cap\cq_m^{s_1}}\big\{\big\langle \bw_i-\bw_i^k, \nabla\phi_k(\bw_i^k)\big\rangle+\frac{1}{2\mu_{ik}}\|\bw_i-\bw_i^k\|^2\big\}.
\]
We note that $\phi_k$ is a quadratic function. By simple calculation, we find that, for any $\bw_i\in\cp_m\cap\cq_m^{s_1}$,
\[
\phi_k(\bw_i)=\phi_k(\bw_i^k)+\big\langle \bw_i-\bw_i^k, \nabla\phi_k(\bw_i^k)\big\rangle+\frac{1}{2}\|H^{k}(i,:)\|^2\|\bw_i-\bw_i^k\|^2.
\]
Thus,
\begin{eqnarray*}
&&\phi_k(\bw_i^{k+1})=\phi_k(\bw_i^k)+\big\langle \bw_i^{k+1}-\bw_i^k, \nabla\phi_k(\bw_i^k)\big\rangle+\frac{1}{2}\|H^{k}(i,:)\|^2\|\bw_i^{k+1}-\bw_i^k\|^2 \\
%&=& \phi_k(\bw_i^k)+\big\langle \bw_i^{k+1}-\bw_i^k, \nabla\phi_k(\bw_i^k)\big\rangle+\frac{1}{2\mu_{ik}}\|\bw_i^{k+1}-\bw_i^k\|^2-\frac{\delta_1}{2} \|\bw_i^{k+1}-\bw_i^k\|^2\\
&\le& \phi_k(\bw_i^k)+\big\langle \bw_i-\bw_i^k, \nabla\phi_k(\bw_i^k)\big\rangle+\frac{1}{2\mu_{ik}}\|\bw_i-\bw_i^k\|^2-\frac{\delta_1}{2} \|\bw_i^{k+1}-\bw_i^k\|^2\\
%\end{eqnarray*}
%\begin{eqnarray*}
&=&\phi_k(\bw_i)+\frac{\delta_1}{2}\|\bw_i-\bw_i^k\|^2-\frac{\delta_1}{2} \|\bw_i^{k+1}-\bw_i^k\|^2,
\end{eqnarray*}
for all $\bw_i\in\cp_m\cap\cq_m^{s_1}$. This means that
\begin{eqnarray}\label{ineq:fht-3}
&&F( \bw_1^{k+1}, \ldots, \bw_{i-1}^{k+1}, \bw_{i}^{k+1}, \bw_{i+1}^k, \ldots, \bw_r^k, H^k)\nonumber \\
&\le&  F( \bw_1^{k+1}, \ldots, \bw_{i-1}^{k+1}, \bw_{i}, \bw_{i+1}^k, \ldots, \bw_r^k, H^k) \nonumber \\
&&+\frac{\delta_1}{2}\|\bw_i-\bw_i^k\|^2-\frac{\delta_1}{2} \|\bw_i^{k+1}-\bw_i^k\|^2,
\end{eqnarray}
for all $\bw_i\in\cp_m\cap\cq_m^{s_1}$.
By hypothesis, $\{(W^k, H^k)\}$ converges to  $(W^*, H^*)$. Then, by using the arguments similar to that of Theorem \ref{thm:gc} to the inequalities \eqref{ineq:fht-1}--\eqref{ineq:fht-3}, we have
\begin{eqnarray*}
&&F(\bw_1^{*}, \ldots, \bw_{i-1}^{*}, \bw_{i}^{*}, \bw_{i+1}^*, \ldots, \bw_r^*, H^{*}) \\
&\le& F( \bw_1^{*}, \ldots, \bw_{i-1}^{*}, \bw_{i}, \bw_{i+1}^*, \ldots, \bw_r^*, H^{*}) +\frac{\delta_1}{2}\|\bw_i-\bw_i^*\|^2,
\end{eqnarray*}
for all $\bw_i\in\cp_m\cap\cq_m^{s_1}$. Therefore,  \eqref{bdwres} holds since  $\|\bw_i-\bw_i^*\|^2\le \|\bw_i\|^2+\|\bw_i^*\|^2\le 2$  for all $\bw_i,\bw_i^*\in\cp_m\cap\cq_m^{s_1}$.
\end{proof}
%--------------------------------
\begin{theorem}\label{thm:convex}
Let  $\{(W^k, H^k)\}$  be the sequence generated by Algorithm \ref{alg1}, which converges to  $(W^*, H^*)$. If $s_2=r$, then we have
\BE\label{bd:fw}
F(W^*,H^*)\leq F(W^*,H) \quad\forall H\in\cg_3.
\EE
%and  for any $1\le i\le r$,
%\BE\label{bd:fwi}
%F(W^*,H^*)\le F( \bw_1^{*}, \ldots, \bw_{i-1}^{*}, \bw_{i}, \bw_{i+1}^*, \ldots, \bw_r^*, H^{*})
%+\delta_1\quad \forall \bw_i\in\cp_m\cap\cq_m^{s_1}.
%\EE
\end{theorem}
\begin{proof}
Let $1\le t\le n$ be fixed. From Step 2 of Algorithm \ref{alg1} we see that  $\bh_t^{k+1}\in\cp_r$ solves the following minimization problem:
\BE\label{op:wik}
\begin{array}{ll}
\min\limits_{\bh_t\in\Rr}  &  \displaystyle \frac{1}{2\nu_{tk}} \big\|\bh_t-\big(\bh_t^k-\nu_{tk}\nabla\psi_k(\bh_t^k)\big)\big\|^2 \\[2mm]
\mbox{s.t.} &{\bf 1}_r^T\bh_t=1,\quad \bh_t\ge 0,
\end{array}
\EE
where ${\bf 1}_r\in\R^r$ is a vector of all ones,
Then there exist two Lagrange multipliers $\gamma_{k+1}$ and $\bp^{k+1}$ such that the following first-order optimization conditions hold:
\BE\label{kkt:wik}
\left\{
\begin{array}{c}
\frac{1}{\nu_{tk}} (\bh_t^{k+1}-\bh_t^k)+\nabla\psi_k(\bh_t^k)+\gamma_{k+1}{\bf 1}_r- \bp^{k+1}=0,\\[2mm]
{\bf 1}_r^T\bh_t^{k+1}=1,\quad \bh_t^{k+1}\geq0,\\[2mm]
\bp^{k+1}\ge 0,\quad \langle \bp^{k+1},\bh_t^{k+1}\rangle=0.
\end{array}
\right.
\EE
We can obtain that
\begin{eqnarray*}
\left\{
\begin{array}{lll}
\gamma_{k+1} &=&-\langle\bh_t^{k+1},1/\nu_{tk}(\bh_t^{k+1}-\bh_t^k) +\nabla\psi_k(\bh_t^k)\rangle, \\
\bp^{k+1} &=& \frac{1}{\nu_{tk}} (\bh_t^{k+1}-\bh_t^k)+\nabla\psi_k(\bh_t^k)+\gamma_{k+1}{\bf 1}_r.
\end{array}
\right.
\end{eqnarray*}
We note that $\nabla\psi_k(\bh_t^k)=(W^{k+1})^T(W^{k+1}\bh_t^k-\bv_t)$. By hypothesis, $\{(W^k, H^k)\}$ converges to  $(W^*, H^*)$. From Remark \ref{rem:fkn} we know that $\|H^{k+1}-H^k\|_F\to 0$ as $k\to \infty$.
Also, $\{\nu_{tk}\}$ is bounded. Taking $k\to\infty$ yields
\begin{eqnarray*}
\left\{
\begin{array}{lll}
\displaystyle \lim_{k\to\infty}\gamma_{k+1} &=&-\langle\bh_t^{*},(W^*)^T(W^*\bh_t^*-\bv_t)\rangle\equiv\gamma_*, \\
\displaystyle \lim_{k\to\infty}\bp^{k+1} &=& (W^*)^T(W^*\bh_t^*-\bv_t)+\gamma_{*}{\bf 1}_r\equiv \bp^*.
\end{array}
\right.
\end{eqnarray*}
This, together with  \eqref{kkt:wik}, implies that
\BE\label{kkt:wistar}
\left\{
\begin{array}{c}
(W^*)^T(W^*\bh_t^*-\bv_t)+\gamma_{*}{\bf 1}_r- \bp^{*}={\bf 0},\\[2mm]
{\bf 1}_r^T\bh_t^{*}=1,\quad \bh_t^{*}\geq0,\\[2mm]
\bp^{*}\ge 0,\quad \langle \bp^{*},\bh_t^{*}\rangle=0.
\end{array}
\right.
\EE
It is obvious that $\bh_t^*$ satisfies \eqref{kkt:wistar} for some  Lagrange multipliers $\gamma_{*}$ and $\bp^{*}$.
Thus,  $\bh_t^*$ is a global solution of the following minimization problem:
\BE\label{op:wi}
\begin{array}{ll}
\min\limits_{\bw_i\in\Rr}  &  \displaystyle \frac{1}{2} \big\|W^*\bh_t-\bv_t\big\|^2 \\[2mm]
\mbox{s.t.} & {\bf 1}_r^T\bh_t=1,\quad \bh_t\ge 0.
\end{array}
\EE
We note that
\[
f(W,H^*)= \frac{1}{2} \|V-W^*H\|_F^2=\sum_{t=1}^{n}\frac{1}{2} \big\|W^*\bh_t-\bv_t\big\|^2
\]
and $H\in\cg_3$ if and only if ${\bf 1}_r^T\bh_t=1$, $\bh_t\ge 0$ ($t=1,\ldots,n$)
for all $H:=[\bh_1,\ldots,\bh_n]\in\Rrn$.
It is easy to see that $H^*:=[\bh_1^*,\ldots,\bh_n^*]$ is a global solution of the following minimization problem:
\BE\label{op:w}
\begin{array}{ll}
\min\limits_{H\in\Rrn}  &  \displaystyle f(W^*,H)= \frac{1}{2} \|V-W^*H\|_F^2 \\[2mm]
\mbox{s.t.} & H\in\cg_3.
\end{array}
\EE
Therefore, $F(W^*,H^*)\leq F(W^*,H)$ for all $H \in \cg_3$.
\end{proof}

\begin{remark}
From Theorems \ref{thm:cp} and \ref{thm:convex}, we observe that the sequence $\{(W^k, H^k)\}$  generated by Algorithm \ref{alg1} converges to  a special critical point of $F$, which is nearly a global minimum of $F$ over each column vector of  the $W$-factor if the parameter $\delta_1>0$ is  sufficiently  small and is globally minimized over the $H$-factor as a whole if the sparseness constraint of $H$ is removed.
In the latter numerical tests, one can see that the solution to the SSMF obtained by the proposed algorithm may have a smaller factorization residual than the PALM method.
\end{remark}
%----------------------------

%----------------------------
\section{Numerical experiments}\label{sec4}

In this section, we report the numerical performance of  Algorithm \ref{alg1}  for solving the SSMF \eqref{pro:smf-eq}  over synthetic and real data.
To illustrate the effectiveness of our method, we compare Algorithm \ref{alg1} with the PALM algorithm in \cite{BS14} (i.e., Algorithm \ref{PALM}),  the PLSA   with additive sparsing regularization for topic models (ARTMsparse) in \cite{VP14}, which was implemented  by simple modification of the PLSA algorithm \cite{H99}\footnote{\url{https://github.com/lizhangzhan/plsa}}. All the  numerical tests were carried out in {\tt MATLAB R2020a} on a linux server with an  Intel Xeon CPU Gold 6230 of 2.10 GHz and 32 GB of RAM.

In our numerical tests, for Algorithms \ref{PALM} and \ref{alg1}, we set  $\delta_1=\delta_2=10^{-6}$, and $c=1/\delta_2=10^{6}$. The errors between the restored distributions $V_{ij}^k=(W^kH^k)_{ij}$ and the model ones $V_{ij}$ were measured by the averaged Hellinger distance \cite{VP14}:
\[
H(V, V^k)=\frac{1}{m}\sum\limits_{j=1}^m\Big(\frac{1}{2}\sum\limits_{i=1}^n( \sqrt{V_{ij}^k}-\sqrt{V_{ij}})^2\Big)^{\frac{1}{2}}.
\]
For comparison purposes, we use the symbols  `{\tt ITmax}', `{\tt ct.}',  `{\tt res.}', `{\tt nnz1.}',
and `{\tt nnz2.}' to denote the largest number of iterations, the averaged total computing time in seconds,  the averaged relative residual $\|V-W^kH^k\|_F/\|V\|_F$, and the averaged  numbers of non-zero elements of $W^k$ and $H^k$ at the final iterates of the corresponding algorithms, respectively.

\subsection{Synthetic data}
We first compare the sparsity of the results obtained by several algorithms on synthetic data and the change of the difference between restored $V^*$ and model $V$ with the number of iteration steps.
\begin{example}\label{synex1}
In this example, we consider the SSMF with fixed $(m,r,n)$ and sparsity.
Here, $V=\widehat{W}\widehat{H}\in \R^{1000\times 500}$ with $\widehat{W}=\Pi_{\cg_1}(\overline{W})\in\R^{1000\times 60}$  and $\widehat{H}=\Pi_{\cg_2}(\overline{H})\in\R^{60\times 500}$ with the true sparsity of each column of $\widehat{W}$ being ${\tt ts_1}=200$ and the true sparsity of each column of $\widehat{H}$ being ${\tt ts_2}=12$, where $\overline{W}$ and $\overline{H}$ are randomly generated by using {\tt rand}.  We report our numerical results for the prescribed sparsity $(s_1,s_2)=(230,17)$.
\end{example}

The numerical results for Example \ref{synex1} are given in Table \ref{syntable1}  by running Algorithms \ref{PALM}, \ref{alg1}, {PLSA} and {ARTMsparse} over $20$ randomly generated initial points with {\tt ITmax}=$300$. Figure \ref{synfig1}  displays the averaged quality of recovery versus the number of iterations for Example \ref{synex1}. We find from Table \ref{syntable1} and Figure \ref{synfig1}  that Algorithm \ref{alg1} works much better than the other methods in terms of the reconstruction error. In addition, the computed solutions via Algorithms \ref{PALM} and  \ref{alg1} have almost the same sparsity since they have the same sparsity level requirements.
%----------------------------
\begin{table}[!ht]\renewcommand{\arraystretch}{1.0} \addtolength{\tabcolsep}{1.0pt}
\begin{center}{\scriptsize
%\resizebox{\textwidth}{20mm}{
  \begin{tabular}[c]{|c|c|c|c|c|}     \hline
                               & {\tt nnz1.}      & {\tt nnz2.} &${\tt H(V,W^kH^k)} $ & {\tt ct.}\\ \hline
{ Alg. \ref{PALM}}     &$17956$   &      $5511    $ &$0.3646 $  &$128.95$ \\ \hline
{Alg. \ref{alg1}}         &$12116$   &      $5751$     &$ 0.0107$  &$191.45$\\ \hline
{ PLSA}                    &$60000$  &       $30000$   & $ 0.5402$ &$507.24$\\ \hline
{ARTMsparse}      &$60000$  &       $2458$   & $0.5439 $       &$505.91$\\ \hline
\end{tabular}}
\end{center}
\caption{Numerical results for Example \ref{synex1}.} \label{syntable1}
\end{table}
%--------------------------
\begin{figure}[!ht]
 \centering
\includegraphics[width=0.65\textwidth]{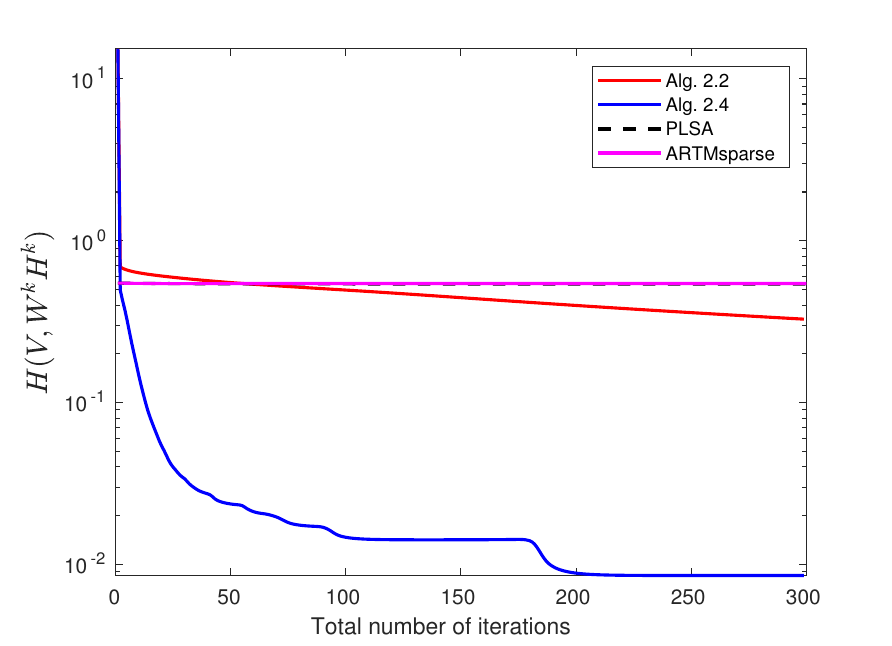}
 \caption{Reconstruction errors for Example \ref{synex1}.}\label{synfig1}
 \end{figure}
%----------------------------------------------
\begin{example}\label{synex:1}
We consider the SSMF with fixed $(m,r,n)$.
Here, $V=\widehat{W}\widehat{H}\in \R^{400\times 200}$ with $\widehat{W}=\Pi_{\cg_1}(\overline{W})\in\R^{400\times 100}$  and $\widehat{H}=\Pi_{\cg_2}(\overline{H})\in\R^{100\times 200}$ with the true sparsity of each column of $\widehat{W}$ being ${\tt ts_1}$ and the true sparsity of each column of $\widehat{H}$ being ${\tt ts_2}$, where $\overline{W}$ and $\overline{H}$ are randomly generated by using {\tt rand}.  We report our numerical results for the true sparsity {\tt ts}$_1=80, 100, 120, 140, 160$, {\tt ts}$_2={\tt ts}_1/4$, and prescribed sparsity $(s_1,s_2)=({\tt ts}_1+30,{\tt ts}_2+15)$.
\end{example}
%----------------------------
\begin{example}\label{synex:2}
We consider the SSMF  with fixed $(m,r,n)$ and different prescribed  sparsity. Here, $V=\widehat{W}\widehat{H}\in \R^{400\times 200}$, where $\widehat{W}=\Pi_{\cg_1}(\overline{W})\in\R^{400\times 50}$  and $\widehat{H}=\Pi_{\cg_2}(\overline{H})\in\R^{50\times 200}$ with the true sparsity of each column of $\widehat{W}$ being ${\tt ts_1}=80$ and the true sparsity of each column of $\widehat{H}$ being ${\tt ts_1}=10$, where $\overline{W}$ and $\overline{H}$ are randomly generated by using {\tt rand}.
We report our numerical results for fixed $V$ and the prescribed sparsity {\rm (a)} $(s_1,s_2)=(80,10)$; {\rm (b)} $(s_1,s_2)=(96, 12)$; {\rm (c)} $(s_1,s_2)=(112, 14)$; {\rm (d)} $(s_1,s_2)=(128, 16)$; {\rm (e)} $(s_1,s_2)=(144, 18)$; {\rm (f)} $(s_1,s_2)=(160, 20)$.
\end{example}

For Examples \ref{synex:1}--\ref{synex:2}, we randomly choose  the same initial point $(W^0,H^0)$ and the stopping criterion is set to be
\[
\frac{\|W^kH^k-W^{k-1}H^{k-1}\|}{\|W^{k-1}H^{k-1}\|}\le{\tt tol},
\]
where ``{\tt tol}" is a prescribed tolerance.

Table \ref{Ptable1} displays the numerical results for Example \ref{synex:1} by running Algorithms \ref{PALM} and \ref{alg1} over $100$ randomly generated $V$ and initial points with ${\tt tol}=10^{-5}$ and ${\tt ITmax}=6000$ and for Example \ref{synex:2} by running Algorithms \ref{PALM} and \ref{alg1} over $100$ randomly generated initial points with ${\tt tol}=10^{-5}$ and ${\tt ITmax}=4000$. Here, the recovery is considered to be successful if the relative reconstruction error $\|V-W^{k}H^k\|_F/\|V\|_F$ is less than $1\%$ at the final iterate $(W^k,H^k)$. % and `{\tt ct.}' means the averaged total computing time over the number of successful reconstructions.

We can observe from Table \ref{Ptable1} that with the increase of prescribed sparsity, the probability of successful  reconstruction of  our algorithm  is much higher than Algorithm \ref{PALM}. %At the same time, our algorithm is much effective over  Algorithm \ref{PALM} in terms of the computing time.

\begin{table}[!ht]\renewcommand{\arraystretch}{1.0} \addtolength{\tabcolsep}{1.0pt}
\begin{center}{\scriptsize
%\resizebox{\textwidth}{20mm}{
  \begin{tabular}[c]{|c|c|c|c|c|c|}     \hline
 \multicolumn{3}{|c|}{Ex. \ref{synex:1}}& \multicolumn{3}{|c|}{Ex. \ref{synex:2}} \\ \hline
\multirow{2}{*}{{\tt ts}$_1$} & Alg. \ref{PALM} & Alg. \ref{alg1} & \multirow{2}{*}{$(s_1,s_2)$} & Alg. \ref{PALM} & Alg. \ref{alg1}    \\ \cline{2-3} \cline{5-6}
& probability       & probability && probability  & probability \\ \hline
{80} &$3\%$   & $59\%$ & (a)  & $0\%$ & $66\%$   \\ \hline
{100} &$2\%$  & $54\%$ & (b)  & $0\%$ & $84\%$   \\ \hline
{120} &$5\%$  & $60\%$ & (c)  & $0\%$ & $83\%$   \\ \hline
{140} &$2\%$  & $57\%$ & (d)  & $1\%$ & $85\%$   \\ \hline
{160} &$3\%$  & $55\%$ & (e)  & $7\%$ & $89\%$   \\ \hline
      &        &         & (f)  &$19\%$ & $88\%$ \\ \hline
\end{tabular}}
\end{center}
\caption{Numerical results for Examples \ref{synex:1} and \ref{synex:2}.} \label{Ptable1}

\end{table}

%Table \ref{table2} lists the numerical results for Example \ref{synex:2} by running Algorithms \ref{PALM} and \ref{alg1} over $100$ randomly generated initial points with ${\tt tol}=10^{-5}$ and ${\tt ITmax}=4000$. Here, the recovery is considered to be successful if the relative reconstruction error $\|V-W^{k}H^k\|_F/\|V\|_F$ is less than $1\%$.

%We see from Table \ref{table2} that our algorithm is much more effective than Algorithm \ref{PALM} in terms of both the  successful  probability of  reconstruction and the computing time.

%-----------------------------------------
\subsection{Real data}
In this subsection,
%%-----------------------------------------
we first consider a numerical example in document recognition \cite{SX13}.
\begin{example}\label{ex:5}
We consider the data converted from grayscale images from the MNIST Handwritten Digits data set\footnote{\url{http://yann.lecun.com/exdb/mnist/}}. We arbitrarily choose $800$  $20\times 20$ images of handwritten digit $3$, which are vectorized to column vectors and normalized to have total sum of one. Then we use the vectorized and normalized images to form the $400\times 800$ target matrix $V$. We would like to decompose $V$ as the product of an $400\times 196$ stochastic matrix $W$ and  a $196\times 800$ sparse stochastic matrix $H$.
\end{example}

Figures \ref{fig1}--\ref{fig2} give the decomposition results for Example \ref{ex:5} with $(s_1, s_2)=(100,100)$ and $(s_1, s_2)=(150,120)$, respectively, where ${\tt tol}=10^{-3}$ and  ${\tt ITmax}=5000$. Here, we only show $36$ reshaped columns of the factors as images because of space limits. Table \ref{table5} displays the relative reconstruction error and the total computing time and Figures \ref{fig3}--\ref{fig4} show the convergence curve and the computing time curve versus the number of iterations.

We see from  Figures \ref{fig1}--\ref{fig2} that our method  is more effective   than  Algorithm \ref{PALM} in  document recognition. We also observe from Table \ref{table5} and Figures \ref{fig3}--\ref{fig4} that, compared with Algorithm \ref{PALM}, our algorithm can obtain smaller factorization residual though we need more computing cost in each iteration.

%--------------------------
\begin{figure}[!ht]
 \centering
\includegraphics[width=1.05\textwidth]{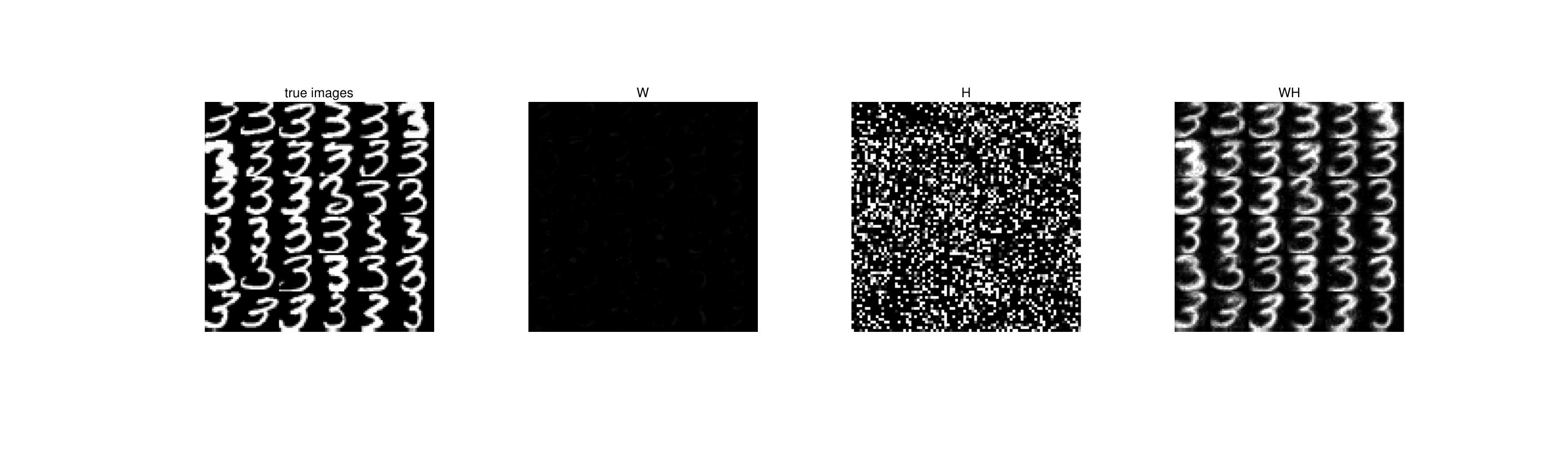}\\
\includegraphics[width=1.05\textwidth]{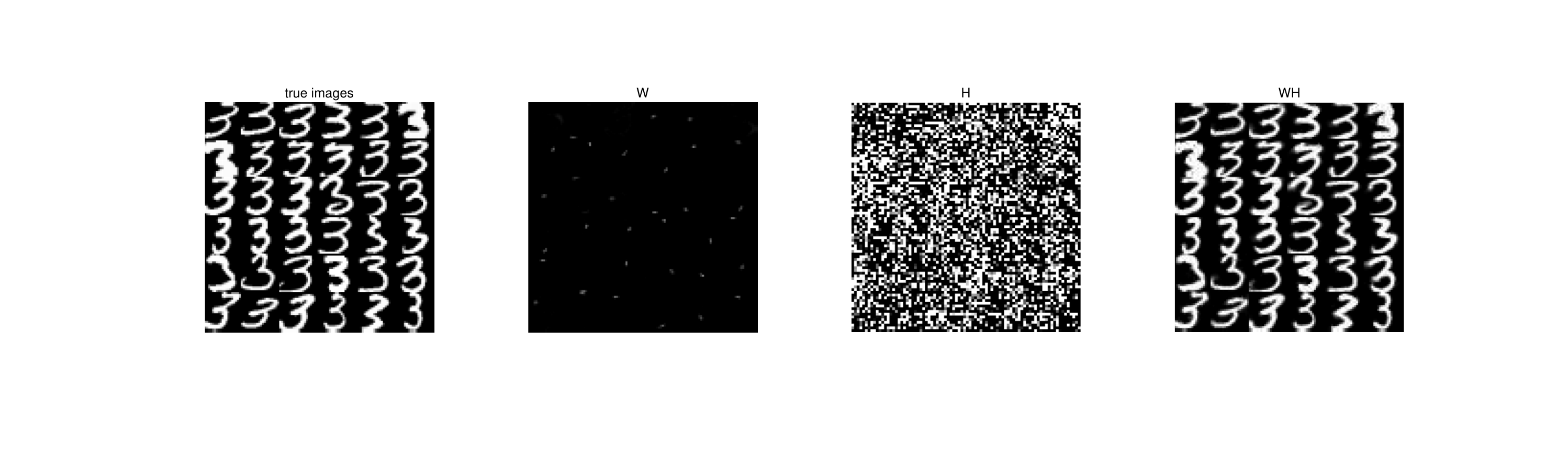}
 \caption{Constructed images with $(s_1, s_2)=(100,100)$ via Alg. \ref{PALM} (top) and Alg.  \ref{alg1} (bottom) for Example \ref{ex:5}.}\label{fig1}
 \end{figure}
%--------------------------
\begin{figure}[!ht]
 \centering
\includegraphics[width=1.0\textwidth]{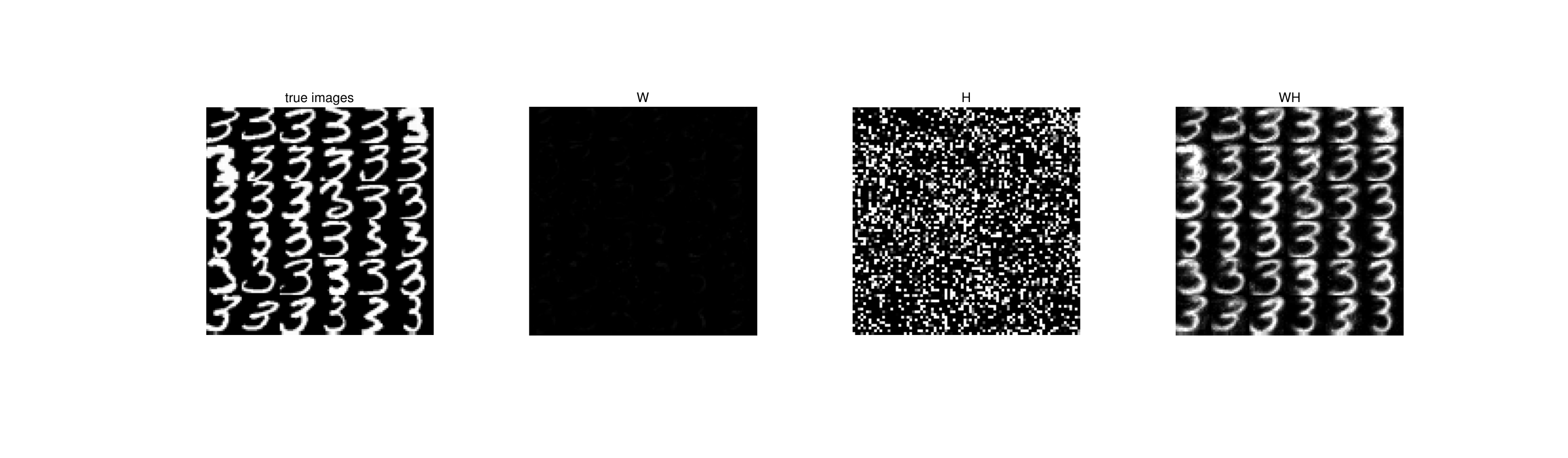}\\
\includegraphics[width=1.0\textwidth]{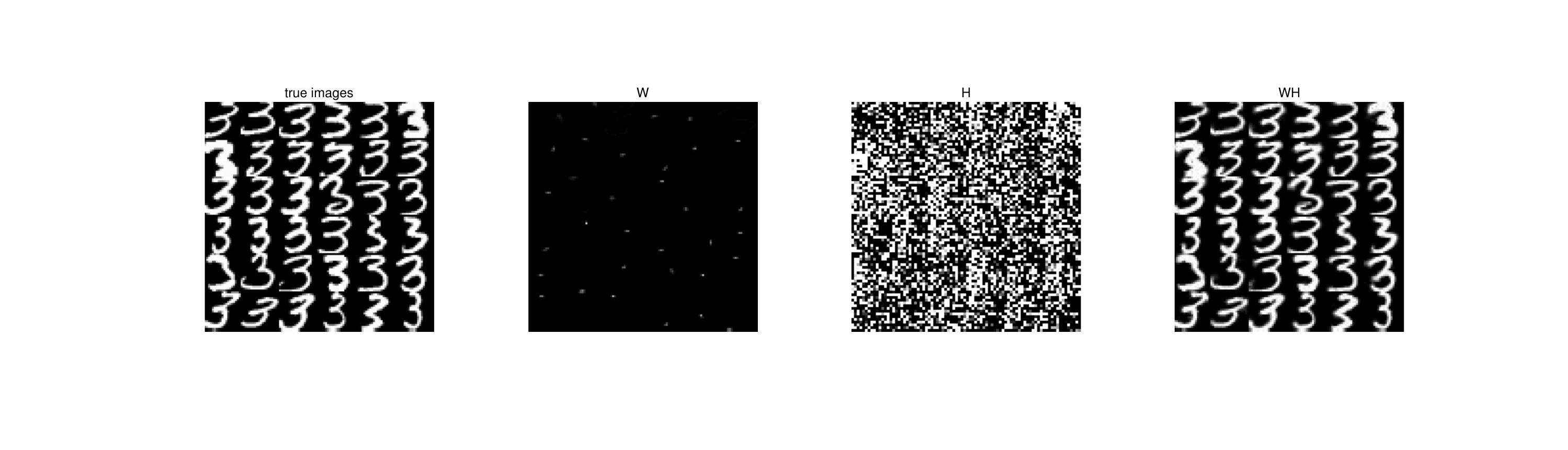}
 \caption{Constructed images with $(s_1, s_2)=(150,120)$ via Alg. \ref{PALM} (top) and Alg.  \ref{alg1} (bottom) for Example \ref{ex:5}.}\label{fig2}
\end{figure}

%--------------------------
\begin{figure}[!ht]
 \centering
\includegraphics[width=1.0\textwidth]{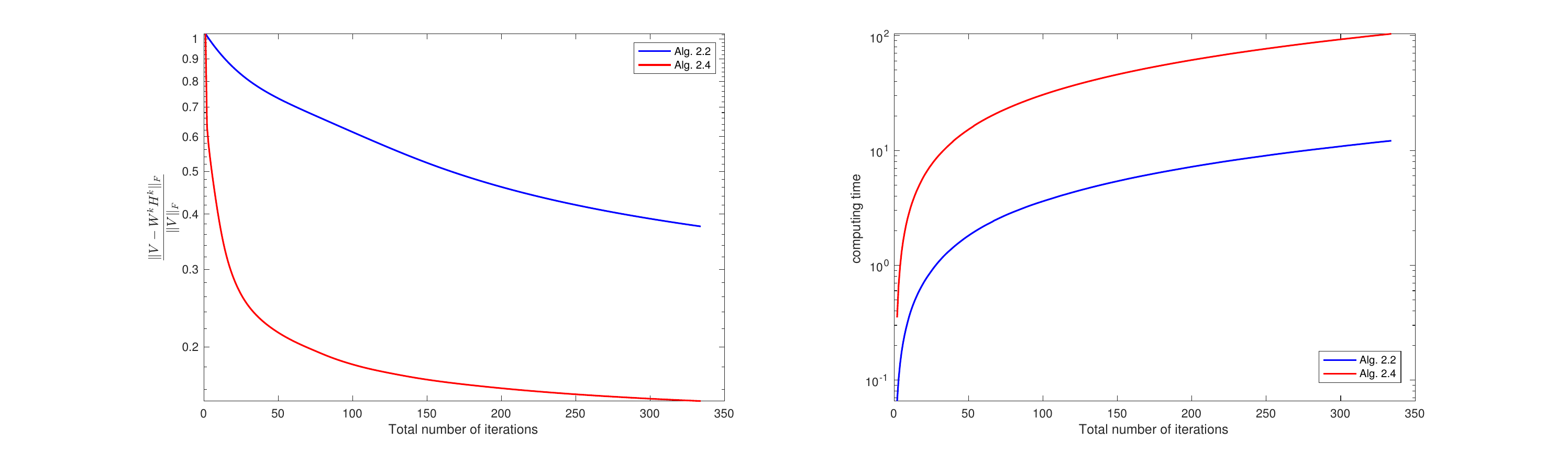}
 \caption{Convergence and computing time curves for one of the tests ($(s_1, s_2)=(100,100)$)  for Example \ref{ex:5}.}\label{fig3}
 \end{figure}
%--------------------------

\begin{figure}[!ht]
 \centering
\includegraphics[width=1.0\textwidth]{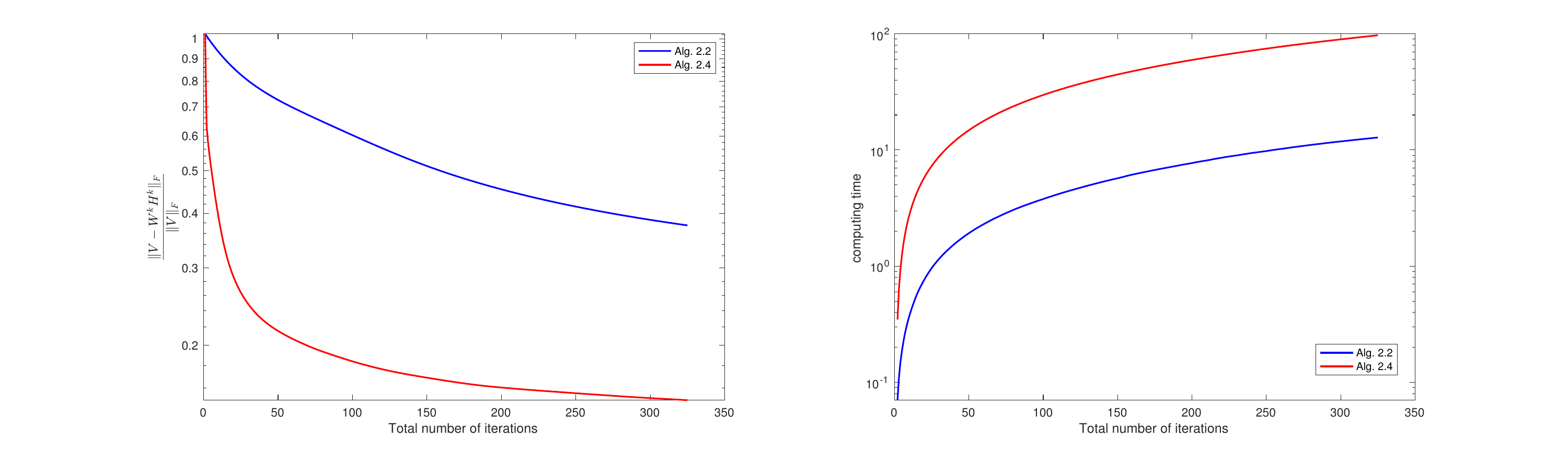}
 \caption{Convergence and computing time curves for one of the tests ($(s_1, s_2)=(150,120)$)  for Example \ref{ex:5}.} \label{fig4}
 \end{figure}
%--------------------------

\begin{table}[!ht]\renewcommand{\arraystretch}{1.0} \addtolength{\tabcolsep}{1.0pt}
\begin{center}{\scriptsize
%\resizebox{\textwidth}{20mm}{
  \begin{tabular}[c]{|c|c|c|c|c|c|c|}     \hline
\multirow{2}{*}{$(s_1,s_2)$}& \multicolumn{3}{|c|}{Alg. \ref{PALM}}  & \multicolumn{3}{|c|}{Alg.  \ref{alg1}}  \\ \cline{2-7}
& {\tt res.}   & ${\tt H(V,W^kH^k)}$ & {\tt ct.} & {\tt res.} & ${\tt H(V,W^kH^k)}$& {\tt ct.} \\ \hline
 $(100, 100)$  & $12.129$ & $0.3132$ & $0.3753$  & $0.1368$   & $0.1293$           & $371.40$  \\ \hline
 $(150, 120)$  & $12.802$ & $0.3130$ & $0.3754$  & $0.1329$   & $0.1264$           & $403.47$  \\ \hline
 \end{tabular}}
\end{center}
\caption{Numerical results for Example \ref{ex:5}.}\label{table5}
\end{table}
%--------------------------

Next, we consider a numerical example in  COVID-19 open Research Dataset \cite{WL20}.
\begin{example}\label{ex:6}
The CORD-19 data\footnote{\url{https://www.semanticscholar.org/cord19/download}}, offered by Allen Institute for AI and other leading research groups, is a growing resource containing all scientific papers on Covid-19 and related historical coronavirus research. We select one of the file packages and use the abstracts of the articles in the file package for the experiment. First, we remove the abstracts with a total number of words less than 400, then remove the meaningless words in the abstracts and remove the words that appear less than 20 times. Finally, we count the times of all words in each abstract document and normalized each column to have total sum of 1. Finally, we get a $12801 \times 8625$ target matrix $V$.
\end{example}

Figure \ref{covidfig} and Table \ref{covidtable} show the quality of recovery and the sparsity of the computed solution via Algorithms \ref{PALM} and \ref{alg1} with $(s_1,s_2)=(4500,5)$ and ARTMsparse via running 350 steps on the CORD-19 data matrix $V$.

We observe from Figure \ref{covidfig} and Table \ref{covidtable} that  Algorithm \ref{alg1} can achieve much smaller reconstruction error than the other methods while the computed solutions via  Algorithms \ref{PALM} and \ref{alg1} are  more sparse than the solution obtained by ARTMsparse.

\begin{figure}[!ht]
 \centering
\includegraphics[width=0.65\textwidth]{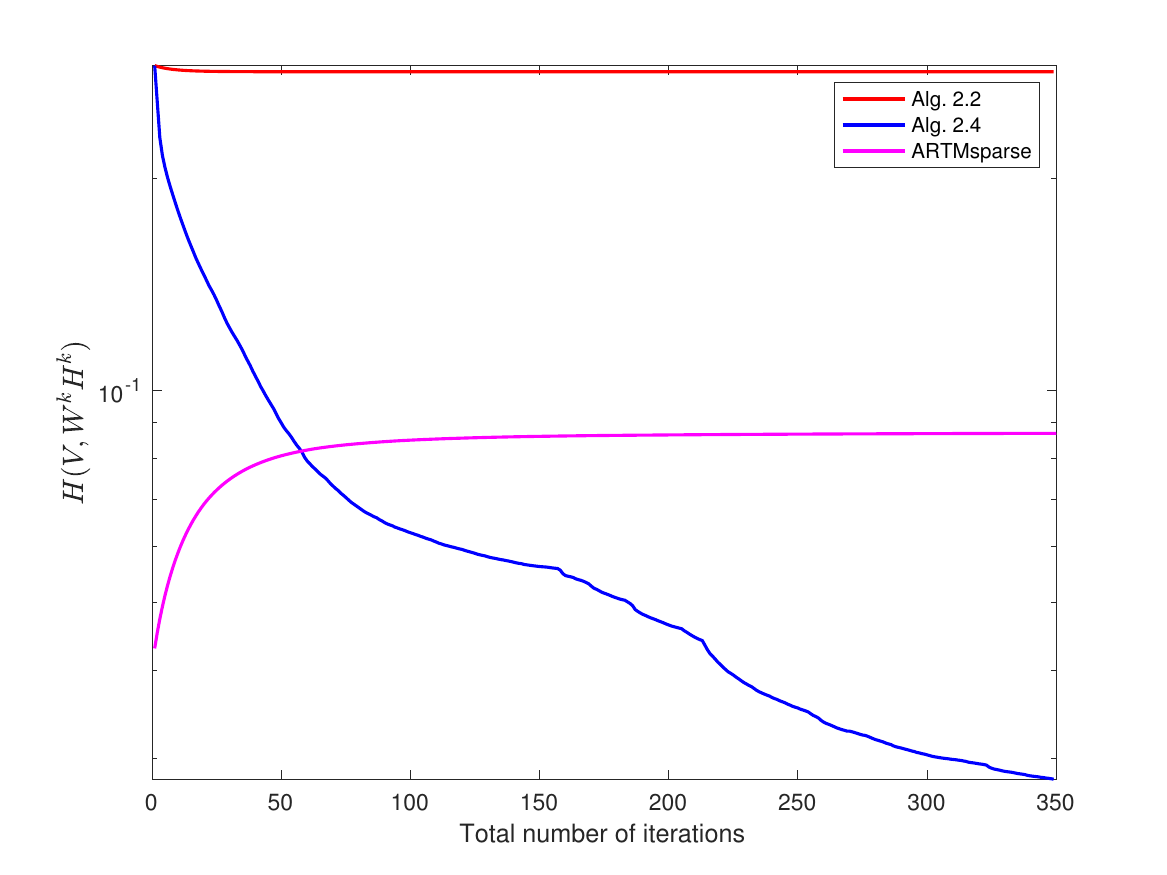}
 \caption{Reconstruction errors for Example \ref{ex:6}.}\label{covidfig}
 \end{figure}

%----------------------------
\begin{table}[!ht]\renewcommand{\arraystretch}{1.0} \addtolength{\tabcolsep}{1.0pt}
\begin{center}{\scriptsize
%\resizebox{\textwidth}{20mm}{
  \begin{tabular}[c]{|c|c|c|c|c|c|}     \hline
& $\|W^k\|_0$      & $\|H^k\|_0$  & {\tt res.} & {\tt $H(V,W^kH^k)$}  & {\tt ct.} \\ \hline
Alg. \ref{PALM}    &$135000$ & $43125$ & $0.5807$ & $0.2833$ & $4.6933\times 10^{2}$  \\ \hline
Alg. \ref{alg1}    &$135000$ & $43125$ & $0.1162$ & $0.0280$ & $1.2355\times 10^{4}$  \\ \hline
ARTMsparse         &$384030$ & $50483$ & $0.2390$ & $0.0868$ & $1.5349\times 10^{5}$  \\ \hline
\end{tabular}}
\end{center}
\caption{Numerical results for Example \ref{ex:6}.} \label{covidtable}
\end{table}

\section{Concluding remarks}\label{sec5}

In this paper,  we have considered the sparse stochastic matrix factorization, which is rewritten as  an unconstrained nonconvex-nonsmooth minimization problem. Then a column-wise update algorithm  is proposed for solving the  minimization problem. The global convergence of the proposed algorithm is established. Numerical experiments on both synthetic and real data sets demonstrate the effectiveness of our algorithm.
The main drawback of the sparse stochastic matrix factorization is that the factors are not necessarily unique in general. How to guarantee the uniqueness of our factorization method is a challenging question. Another interesting question is how to choose the sparsity level for the sparse stochastic matrix factorization. Therefore, further studies are needed.

\section*{Acknowledgments}
We are very grateful to the editor and the referees for their valuable comments and
suggestions, which have considerably improved this paper.

\end{document}